\documentclass[11pt,leqno]{article}
\oddsidemargin =0mm
\evensidemargin=0mm
\topmargin =-15mm
\textwidth =160mm
\textheight =240mm
\usepackage{amsmath,amssymb,mathrsfs,amsthm,bm,ddag}

\usepackage[normalem]{ulem}
\usepackage{eucal}

%


\usepackage{xy}
\input{xy}
\xyoption{all}

\usepackage[hypertex]{hyperref}


%
%

\usepackage{comment}


\makeatletter
\def\@seccntformat#1{\csname the#1\endcsname.\hspace{2ex}}

 \renewcommand{\subsection}%
  {\@startsection{subsection}%
  {2}%
  {\z@}%
  {2ex}
  {0ex}
  {\reset@font\normalsize\bfseries}}%


 \@addtoreset{equation}{subsection}


 \newcommand{\nsection}{\@startsection{section}{1}{\z@}%
     {-5ex}
     {1ex}
     {\reset@font\center\large\sc}}

 \renewenvironment{thebibliography}[1]
 {\nsection*{\refname\@mkboth{\refname}{\refname}}%
   \list{\@biblabel{\@arabic\c@enumiv}}%
   {\settowidth
   \labelwidth{\@biblabel{#1}}%
   \leftmargin
	\labelwidth
        \advance
	 \leftmargin
	 \labelsep
         \@openbib@code
         \usecounter{enumiv}%
         \let\p@enumiv\@empty
	 \parskip=0pt
	 \itemsep=1pt
	 \parsep=1pt
	 \itemindent=\z@
         \renewcommand\theenumiv{\@arabic\c@enumiv}}%
   	 \sloppy
   	 \clubpenalty4000
   	 \@clubpenalty\clubpenalty
   	 \widowpenalty4000%
   	 \footnotesize
   	 \sfcode`\.\@m}
  	 {\def\@noitemerr
    	 {\@latex@warning{Empty `thebibliography' environment}}%
   	 \endlist}

\makeatother



\newtheoremstyle{thm}
 {1em}
 {3pt}
 {\itshape}
 {}
 {\bf}
 {. ---}
 {0.5em}
 {}
\newtheoremstyle{dfn}
 {1em}
 {3pt}
 {}
 {}
 {\bf}
 {. {---}}
 {0.5em}
 {}

\swapnumbers
\theoremstyle{thm}

\newtheorem*{lem*}{Lemma}

\newtheorem*{cor*}{Corollary}
\newtheorem{prop}[subsection]{Proposition}
\newtheorem*{prop*}{Proposition}

\newtheorem*{conj*}{Conjecture}
\newtheorem*{thm*}{Theorem}

\theoremstyle{dfn}

\newtheorem*{dfn*}{Definition}

\newtheorem*{ex*}{Example}

\newtheorem*{rem*}{Remark}
\newtheorem*{cl}{Claim}

\usepackage{color}
\newenvironment{meta}{
\noindent \color{red}
\sffamily[}{\upshape]}

\newsavebox{\circlebox}
\savebox{\circlebox}{\fontencoding{OMS}\selectfont\char13}
\newlength{\circleboxwdht}

\renewcommand{\H}{\mc{H}}


\newcommand{\ul}[1]{\underline{#1}}

\newcommand{\Corr}{\mbf{Corr}}
\newcommand{\Cat}{\mbf{Cat}}
\newcommand{\reptd}{\ul{\mr{dt}}}
\newcommand{\repsw}{\ul{\mr{sw}}}
\newcommand{\Set}{\mc{S}\mr{et}}

\begin{document}
\title{On the Serre conjecture for Artin characters in the geometric case}
\author{Tomoyuki Abe}
\date{}
\maketitle

\begin{flushright}
 {\it Dedicated to Takeshi Saito with respect}
\end{flushright}

\begin{abstract}
 Let $G$ be a finite group and $A$ be a regular local ring on which $G$ acts.
 Under certain assumptions on $A$ and the action, Serre defined a function $a_G\colon G\rightarrow\mb{Z}$
 which can be viewed as a higher dimensional analogue of Artin character,
 and conjectured that it is associated to a $\mb{Q}_\ell$-rational representation of $G$ for any prime $\ell$ invertible in $A$.
 We prove this conjecture in the equal characteristic case.
\end{abstract}

\section*{Introduction}
Let $A$ be a regular local ring with maximal ideal $\mf{m}$ and $G$ be a finite group acting on $A$.
For $\sigma\in G$, let $I_\sigma$ be the ideal of $A$ generated by the set $\{a-\sigma(a)\mid a\in A\}$.
Assume that the following conditions are satisfied:
\begin{itemize}
 \item The $G$-fixed part $A^G$ is a noetherian ring and $A$ is finite over $A^G$;

 \item For any $\sigma\in G\setminus\{1\}$, the ring $A/I_\sigma$ is of finite length;

 \item The induced map $A^G/(A^G\cap\mf{m})\rightarrow A/\mf{m}$ is an isomorphism.
\end{itemize}
Under these assumptions, we define a function $a_G\colon G\rightarrow\mb{Z}$ by
\begin{equation*}
 a_G(\sigma):=
  \begin{cases}
   -\mr{length}_A(A/I_\sigma)&\qquad\sigma\in G\setminus\{1\}\\
   -\sum_{\xi\in G\setminus\{1\}} a_G(\xi)&\qquad\sigma=1.
  \end{cases}
\end{equation*}
When $A$ is a discrete valuation ring, $a_G$ is nothing but the Artin character (cf.\ \cite[\S19.1]{Slin}).
In this case, there exists a representation of $G$ whose associated character is $a_G$.
For any prime number $\ell$ invertible in $A$, this representation is $\mb{Q}_\ell$-rational.
See \cite[\S19.2, Thm 44]{Slin} for the details.
J.-P. Serre conjectured that similar phenomena occur for general $A$ as follows:

\begin{conj*}[Serre {\cite[\S6]{S}}]
 Let $\ell$ be a prime number which is invertible in $A$.
 \begin{enumerate}
  \item\label{conj-1}
       The function $a_G$ is the character of a $\overline{\mb{Q}}_\ell$-representation of $G$, which we call the {\em Artin representation}.

  \item\label{conj-2}
       The Artin representation is $\mb{Q}_{\ell}$-rational.
 \end{enumerate}
 
\end{conj*}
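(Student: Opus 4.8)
The plan is to reduce to Serre's original one-dimensional case, replacing the abstract length function along the way by a geometric ramification invariant that can be controlled by resolution of singularities or alterations --- which is exactly where equal characteristic enters.

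First I would make the routine reductions: pass to the completion, so that $A\cong k[[x_1,\dots,x_d]]$ by Cohen's theorem, then make a faithfully flat base change so that the residue field $k$ is separably closed, and observe that the finite-length hypothesis already forces the $G$-action to be faithful. The case $d=0$ is vacuous and $d=1$ is Serre's discrete valuation ring case, settled in \cite[\S19.2]{Slin}; so assume $d\ge 2$. When $\mathrm{char}\,k=0$ the action on $k[[x_1,\dots,x_d]]$ is linearizable by averaging, the finite-length hypothesis translates into the condition that no $\sigma\neq 1$ has $1$ as an eigenvalue on $\mathfrak m/\mathfrak m^2$, hence $I_\sigma=\mathfrak m$ for all $\sigma\neq 1$ and $a_G$ is the character of the regular representation minus the trivial one, an honest $\mathbb{Q}$-rational representation. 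So the substantive case is $\mathrm{char}\,k=p>0$.

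In that case the finite-length hypothesis says precisely that each $\sigma\neq 1$ fixes only the closed point $x$, so $G$ acts freely on the punctured spectrum $U:=\mathrm{Spec}(A)\setminus\{x\}$ and $U\to V:=U/G$ is a connected finite étale $G$-torsor, ``ramified'' only at the codimension-$d$ closed point of $\mathrm{Spec}(A^G)$. I would then: (i) algebraize, producing a normal affine $k$-variety with a marked point and a $G$-cover whose completion there recovers the above, using finiteness of $A$ over $A^G$ and Artin approximation; (ii) after resolving the base and applying alterations to render the wild ramification tractable --- so that the cover becomes étale outside a simple normal crossings divisor --- cut down by a \emph{generic} smooth curve $C$ meeting that divisor transversally over the marked point, so that the $G$-torsor pulls back to a connected $G$-torsor over a punctured formal disk, a discrete valuation ring situation where Serre's theorem supplies an honest, $\mathbb{Q}_\ell$-rational Artin representation; (iii) prove the comparison that, for generic $C$ and every $\sigma\neq 1$, the length $\mathrm{length}_A(A/I_\sigma)$ equals the analogous length for the discrete valuation ring over $C$, so that $a_G$ is the character of that one-dimensional Artin representation. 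Granting (iii), both parts \ref{conj-1} and \ref{conj-2} follow from the one-dimensional case.

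The heart of the matter, and the step I expect to be hardest, is the comparison (iii): it asserts that $a_G$ is computed by a generic curve section, a generic-constancy statement for a ramification invariant requiring genuine higher-dimensional ramification theory. I would attack it by identifying $\langle a_G,\rho\rangle$, for an irreducible $\overline{\mathbb{Q}}_\ell$-representation $\rho$ of $G$ and the associated smooth sheaf $\mathcal F_\rho$ on $V$, with a localized intersection number at the closed point of the characteristic cycle of the extension of $\mathcal F_\rho$ --- a Milnor-formula type identity --- and then using lower semicontinuity along the family of curve sections together with the effectivity of characteristic cycles (Saito) to obtain at once the equality and the non-negativity $\langle a_G,\rho\rangle\ge 0$ demanded by the effectivity part of \ref{conj-1}. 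A secondary technical point is compatibility with restriction to subgroups --- valid except that $a_G|_H$ differs from $a_H$ by a rational multiple of the regular character of $H$ --- which lets one first Brauer-reduce $G$ to elementary subgroups; but the essential obstacle is the ramification-theoretic comparison (iii).
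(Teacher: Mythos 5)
Your proposal hinges on the comparison (iii), and that is where it breaks down; the step is not a technical gap but the whole conjecture in disguise, and the tools you name cannot deliver it. The claim is elementwise and equivariant: for \emph{every} $\sigma\in G\setminus\{1\}$ the number $\mathrm{length}_A(A/I_\sigma)$ should equal the corresponding Artin-character value of the cover pulled back to a generic curve. First, a single curve section need not even see all of $G$: connectedness of the pullback torsor over the punctured curve germ requires the local fundamental group of that germ to surject onto $G$, and no wild local Lefschetz theorem guarantees this; after your resolution the inertia along a transversal curve is the inertia of the boundary component it meets, in general a proper subgroup $H$, and then the curve-level character vanishes on elements outside the conjugates of $H$ while $a_G(\sigma)=-\mathrm{length}_A(A/I_\sigma)\neq 0$. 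Second, even with full inertia the answer depends on the curve: for the order-$p$ action on $k[[x,y]]$ with $\sigma(x)=x/(1+x)$, $\sigma(y)=y/(1+y)$ one has $\mathrm{length}_A(A/I_\sigma)=4$, while the $\sigma$-stable diagonal curve (the pullback of its image in the quotient) gives Artin value $2$; so ``generic'' must be quantified and the generic value computed, which is exactly the higher-dimensional ramification comparison known only in dimension $\le 2$ (\cite{KSS}, \cite{KS2}). The machinery you propose for it --- characteristic cycles, the Milnor formula, semicontinuity, effectivity --- computes non-equivariant total-dimension invariants of the sheaves $\mathcal{F}_\rho$ on the (resolved) quotient; it does not compute the trace of each individual $\sigma$, and the identity ``$\langle a_G,\rho\rangle=$ localized intersection number of the characteristic cycle'' is essentially the statement to be proved and is not available in dimension $>2$. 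Moreover, that route, even if completed, would prove only part (1): $\mathbb{Q}_\ell$-rationality (2) would still require the reduction to the one-dimensional case with full inertia, i.e.\ the very step in doubt.

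For contrast, the paper's proof avoids all of this and, in particular, uses no characteristic cycles. By \cite[Lem 5.3]{KSS} one reduces not to a curve in the base but to a fibration $f\colon X\rightarrow S$ of $X$ itself over a trait, and the curve-section Artin character is replaced by the total dimension representation of the vanishing cycles $(\mathrm{R}\phi_f\Lambda)_x$. Proposition \ref{complem} rewrites this, via Laumon's computations \cite{L}, as minus the nearby-cycle fiber $\Psi_{\mathrm{pr}'}(\mathcal{L}(ft'))_{(x,\infty')}$ of the twisted sheaf over $\mathbb{P}^1_{t'}$, and Theorem \ref{thm} computes the $\sigma$-trace of the latter by the relative Lefschetz--Verdier trace of \cite{LZ}: after Lemma \ref{extula} produces a universally locally acyclic extension over an alteration of $\mathbb{P}^1_{t'}$, the relative trace is constant along $\mathbb{P}^1_{t'}$, equal at $0'$ to the intersection number $\mathrm{length}(\mathcal{O}_{X^{\sigma}})=-a_G(\sigma)$ and at $\infty'$ to the nearby-cycle trace; $\mathbb{Q}_\ell$-rationality then follows because the construction is built from the Swan representation, which has a $\mathbb{Z}_\ell$-structure. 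So to salvage your outline you would either have to prove the generic-curve comparison (thereby redoing the conjecture by a harder route), or replace it by a mechanism, such as this ULA deformation argument, that computes the equivariant trace directly.
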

\noindent
In this article, we prove this conjecture in the equal characteristic situation:
\begin{thm*}
 \label{main}
 The conjecture is true in the case where $A$ contains a finite field.
\end{thm*}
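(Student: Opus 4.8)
The strategy is to realise $a_G$ as a virtual character of an $\ell$-adic cohomology group of a smooth proper variety carrying a $G$-action, so that the arithmetic assertions of the conjecture become formal consequences.

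\emph{Reductions.} Since each $A/I_\sigma$ has finite length, passing to the $\mathfrak m$-adic completion changes neither the integers $\mathrm{length}_A(A/I_\sigma)$ nor the representation theory, so I may assume $A$ complete. The third hypothesis forces $G$ to act trivially on $k:=A/\mathfrak m$; a coefficient field of the complete equal-characteristic ring $A^G$ (Cohen's theorem) then lies inside $A^G$, hence is $G$-stable, and maps isomorphically onto $k$, so that $A\cong k[[x_1,\dots,x_n]]$ with $G$ acting by continuous $k$-algebra automorphisms. An elementary estimate shows that two such actions agreeing modulo $\mathfrak m^N$, with $N$ exceeding every $\mathrm{length}_A(A/I_\sigma)$, define the same ideals $I_\sigma$ and hence the same $a_G$. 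Applying Artin approximation to the equations defining ``a faithful action of $G$ by $k$-algebra automorphisms'' --- for which the given action is a formal solution --- produces an \emph{algebraic} such action agreeing with it modulo $\mathfrak m^N$; this realises the pair $(G,a_G)$ on a smooth affine $k$-scheme with a $k$-rational fixed point $w_0$. (For $n\ge 1$ the length hypothesis makes $G$ act faithfully; for $\sigma\neq1$ its fixed locus on $\mathrm{Spec}(A)$ is supported at the closed point, so after shrinking the same is true on the globalised scheme.)

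\emph{Globalisation and the trace formula.} The main point is then to embed this into a smooth \emph{proper} $G$-scheme $\overline W$ over $k$ in which, for every $\sigma\neq1$, the fixed scheme $\overline W^{\,\sigma}$ equals the infinitesimal neighbourhood $\mathrm{Spec}(A/I_\sigma)$ of $w_0$ and nothing else: one deletes $w_0$ --- on the complement $G$ acts freely, giving a $G$-torsor over its quotient --- then compactifies the quotient, extends the cover and glues $w_0$ back in, the delicate issue being to arrange that no fixed point appears on the boundary. Granting such an $\overline W$, the Lefschetz--Verdier trace formula applied over $\overline k$ yields, for $\sigma\neq1$,
\[
 \sum_i(-1)^i\,\mathrm{tr}\bigl(\sigma\mid H^i(\overline W_{\overline k},\mathbb Q_\ell)\bigr)\;=\;\mathrm{length}\bigl(\mathcal O_{\overline W\times\overline W,(w_0,w_0)}/(I_\Delta+I_{\Gamma_\sigma})\bigr)\;=\;\mathrm{length}_A(A/I_\sigma),
\]
because the local term of the constant sheaf at the isolated fixed point $w_0$ is the intersection multiplicity of the graph $\Gamma_\sigma$ with the diagonal $\Delta$, and these, being smooth of complementary dimension and meeting properly, contribute the length of $\mathcal O_{\overline W,w_0}/I_\sigma$. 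Therefore the virtual $\mathbb Q_\ell[G]$-module $V:=\sum_i(-1)^{i+1}[H^i(\overline W_{\overline k},\mathbb Q_\ell)]$ has character $a_G$ on $G\setminus\{1\}$. Summing the identity over all of $G$ gives $\chi_V(1)-a_G(1)\in|G|\mathbb Z$, so $a_G=\chi_V+\lambda\,\mathrm{reg}_G$ for some $\lambda\in\mathbb Z$, where $\mathrm{reg}_G$ is the regular representation; in particular $a_G$ is a virtual $\mathbb Q_\ell$-character and $\langle a_G,\chi\rangle\in\mathbb Z$ for every irreducible $\chi$ --- the expected analogue of the Hasse--Arf theorem.

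\emph{Effectivity and rationality.} Since $\sigma$ maps $I_{\sigma^{-1}}$ onto $I_\sigma$, one has $\mathrm{length}_A(A/I_\sigma)=\mathrm{length}_A(A/I_{\sigma^{-1}})$, and a short manipulation of the definition gives, for any irreducible character $\chi$,
\[
 \langle a_G,\chi\rangle=\frac1{|G|}\sum_{\sigma\neq1}\mathrm{length}_A(A/I_\sigma)\bigl(\chi(1)-\mathrm{Re}\,\chi(\sigma)\bigr)\;\ge\;0,
\]
as $\chi(1)=\dim\chi\ge|\chi(\sigma)|\ge\mathrm{Re}\,\chi(\sigma)$. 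Together with the integrality above this shows $a_G=\sum_\chi\langle a_G,\chi\rangle\chi$ is a genuine character, which is part (\ref{conj-1}) of the conjecture. Being at once effective over $\overline{\mathbb Q}_\ell$ and the character of a virtual $\mathbb Q_\ell$-representation, $a_G$ is effective over $\mathbb Q_\ell$: an element of the Grothendieck group of $\mathbb Q_\ell[G]$-modules that is a nonnegative combination of the absolutely irreducible characters must be a nonnegative combination of the $\mathbb Q_\ell$-irreducible ones, since each of the latter involves a full Galois orbit of the former. This establishes part (\ref{conj-2}), hence the theorem. The argument is independent of the chosen prime $\ell$ invertible in $A$, and positive equal characteristic is used only to have $\ell$-adic cohomology of $k$-varieties available; I expect the one serious obstacle to be the geometric globalisation --- producing $\overline W$ with $w_0$ its sole fixed point and the correct infinitesimal structure there.
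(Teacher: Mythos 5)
Your character-theoretic endgame is fine: positivity of $\langle a_G,\chi\rangle$ is elementary, and an element of the Grothendieck group of $\mathbb{Q}_\ell[G]$-modules whose character is a nonnegative combination of absolutely irreducible characters is indeed $\mathbb{Q}_\ell$-effective (Schur indices are positive), so everything would follow once $a_G$ is realized, up to a multiple of the regular representation, in the cohomology of your hypothetical $\overline W$. The gap is exactly the step you flag at the end, and it is not a technical delicacy but the entire problem: a smooth proper $G$-scheme $\overline W$ whose fixed locus, for every $\sigma\neq 1$, is precisely the infinitesimal neighbourhood of the single point $w_0$ does not exist in general. Already in the classical one-dimensional case it is provably impossible: take $A=k[[t]]$ and $G=\mathbb{Z}/\ell$ ($\ell\neq p$) acting by $t\mapsto\zeta_\ell t$. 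Such an $\overline W$ would give a cyclic degree-$\ell$ cover of smooth proper curves $\overline W\to \overline W/G=:C$, totally and tamely ramified at one point $c_0$ and \'etale elsewhere; but in the prime-to-$p$ fundamental group of $C\setminus\{c_0\}$ the tame inertia generator at the unique puncture is a product of commutators, hence dies in the abelian quotient $\mathbb{Z}/\ell$, contradicting total ramification at $c_0$. In general any equivariant compactification acquires ramification --- i.e.\ fixed points --- along the boundary (this is exactly why Katz--Gabber canonical extensions are only \emph{tame}, not \'etale, at $\infty$), and the Lefschetz--Verdier formula then carries boundary local terms that your identity $a_G=\chi_V+\lambda\,\mathrm{reg}_G$ ignores. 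Controlling those terms is the actual content of Katz's and Laumon's constructions in dimension one, and no higher-dimensional substitute for your $\overline W$ is available.

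The paper circumvents compactification of the $G$-variety altogether. It keeps $X\to S$ local, introduces the auxiliary projective line $\mathbb{P}^1_{t'}$ and the Artin--Schreier twist $\mathcal{L}(ft')$, and applies Lu--Zheng's \emph{relative} Lefschetz--Verdier trace over the base $\mathbb{P}^1_{t'}$ (after Lemma \ref{extula}, which produces a universally locally acyclic extension over an alteration of the base): constancy of the relative trace lets one equate its value at $0'$, where the sheaf is constant and the local term is the naive intersection number $-a_G(\sigma)$, with its value at $\infty'$, where it becomes $\mathrm{Tr}\bigl(\sigma;\Psi_{\mathrm{pr}'}(\mathcal{L}(ft'))_{(x,\infty')}\bigr)$ (Theorem \ref{thm}); Proposition \ref{complem}, resting on Laumon's computations, then identifies this nearby-cycle complex with the total dimension representation of $(\mathrm{R}\phi_f\Lambda)_x$, whose $\mathbb{Q}_\ell$-rationality comes from the Swan representation being defined over $\mathbb{Z}_\ell$. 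In other words, the r\^ole you want $\overline W$ and its boundary-free compactification to play is played in the paper by a one-parameter family of Artin--Schreier twists, which is why no global fixed-point-free model is ever needed. As it stands, your proposal is missing the key construction and, in the form stated, that construction cannot exist.
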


Most of the known cases of the conjecture are in the case where $A$ is of dimension less than or equal to $2$.
The case $\dim(A)=1$ is classical.
The equal characteristic and $\dim(A)=2$ case was proved in \cite{KSS}.
In \cite{Ab}, the mixed characteristic case of $\dim(A)=2$ with some conditions is proved.
In \cite{KS}, conjecture \ref{conj-1} is reproved under the same conditions as \cite{KSS} in a more conceptual method.
Pursuing this method, in \cite{KS2}, conjecture \ref{conj-1} is proved for {\em any} ring $A$ whose dimension is $2$.
The author does not know how much is known in the case of $\dim(A)>2$.

Let us overview our proof.
In fact, our proof is surprisingly simple, and we do not use recent advances in ramification theory, for example the theory of characteristic cycles in \cite{Schar}.
We prove the theorem by solving \cite[Conj 5.1]{KSS}.
Roughly speaking, \cite[Conj 5.1]{KSS} states as follows:
Let $k$ be an algebraically closed field of characteristic $p>0$.
Let $S=\mr{Spec}(k\{t\})$, where $k\{t\}$ is the strict henselization of $\mb{A}^1_t:=\mr{Spec}(k[t])$ at $0$,
and $f\colon X\rightarrow S$ be a morphism of finite type and $G$ acts on $X$ over $S$ with single fixed point $x$.
Under some assumptions, \cite{KSS} conjectures that $\mr{Tr}(\sigma;\reptd(\mr{R}\phi_f\mb{Q}_{\ell})_x)=a_G(\sigma)$ for any $\sigma\in G\setminus\{1\}$ and $\ell\neq p$.
Here, $\mr{R}\phi_f$ is the vanishing cycle functor, and $\reptd$ denotes the associated ``total dimension representation'' (cf.\ Definition \ref{tddfn}).
Let $\mb{P}':=\mb{P}^1_{t'}$ (where $t'$ denotes the coordinate),
$\mr{pr}'\colon X\times\mb{P}'\rightarrow\mb{P}'$ be the projection, and $\mc{L}$ be the Artin-Schreier sheaf on $\mb{A}^1$ (associated with a fixed non-trivial additive character).
The pullback of $\mc{L}$ by the function $ft'$ is denoted by $\mc{L}(ft')$, which is a sheaf on $X\times\mb{P}'$.
The values of $\mc{L}(ft')$ at the points of the fiber of $t'=\infty'\in\mb{P}'$ will be important in the later discussion,
but we do not go into the details here and simply take the zero extension around the fiber $X\times\{\infty'\}$.
First, by using ideas from \cite[\S6]{A}, we interpret $\reptd(\mr{R}\phi_f\mb{Q}_{\ell})_x$ as $-[\Psi_{\mr{pr}'}(\mc{L}(ft'))_{(x,\infty')}]$.
This is a corollary of some computations in \cite{L}, and carried out in \S\ref{sec2}.

The computation of $\mr{Tr}(\sigma;\Psi_{\mr{pr}'}(\mc{L}(ft'))_{(x,\infty')})$ is done in \S\ref{sec1}.
For this, we use the Lefschetz-Verdier pairing introduced in [SGA 5, Exp.\ III].
Let $Y$ be a scheme, $\sigma$ be an endomorphism of $Y$, $\mc{F}$ be a sheaf on $Y$, and $\alpha\colon\sigma^*\mc{F}\rightarrow\mc{F}$.
With these data, Lefschetz-Verdier pairing yields an element $\tau(\mc{F},\alpha)=\left<\alpha,\mr{id}\right>\in\mr{H}^0(Y^{\sigma},\mc{K}_{Y^{\sigma}})$,
where $Y^{\sigma}$ is the ``$\sigma$-fixed part'' and $\mc{K}$ is the dualizing complex.
One of the key inputs of this article is the ``family version'' of Lefschetz-Verdier pairing established recently by Lu and Zheng \cite{LZ}.
They interpreted the classical Lefschetz-Verdier pairing as the categorical trace of a certain symmetric monoidal 2-category defined from $D^{\mr{b}}_{\mr{c}}(Y)$,
which enabled them to generalize this absolute picture of SGA naturally to the family situation.
More precisely, let $T$ be a scheme considered as a parameter space, and consider a sheaf $\mc{F}_T$ on $Y\times T$
with $\alpha_T\colon(\sigma\times\mr{id}_T)^*(\mc{F}_T)\rightarrow\mc{F}_T$.
When $\mc{F}_T$ is moreover {\em universally locally acyclic} over $T$, they defined the generalized Lefschetz-Verdier pairing which takes value in
$\mr{H}^0(Y^{\sigma}\times T,\mc{K}_{Y^{\sigma}\times T/T})$.
Using the fact that the pullback functor yields a symmetric monoidal functor between the symmetric monoidal 2-categories
they used for the construction,
the generalized Lefschetz-Verdier pairing is compatible with base change.

To complete our proof, we apply the formalism of generalized Lefschetz-Verdier pairing to our situation.
Assume that $\mc{L}(ft')$ is universally locally acyclic over $\mb{P}'$.
We can immediately observe that the restriction morphism
$\mr{H}^0(X^{\sigma}\times\mb{P}',\mc{K}_{X^{\sigma}\times\mb{P}'/\mb{P}'})\rightarrow\mr{H}^0(X^{\sigma}\times\{t'\},\mc{K}_{X^{\sigma}})\cong\Lambda$
is an isomorphism for any closed point $t'$ of $\mb{P}'$.
This implies that the Lefschetz-Verdier pairing does {\em not} change even if we vary $t'$ in $\mb{P}'$.
If we specialize $\mc{L}(ft')$ over $0'\in\mb{P}'$, we have $\mc{L}(ft')|_{X\times\{0\}}\cong\Lambda$.
Thus, the Lefschetz-Verdier pairing $\tau(\Lambda,\sigma)$, which is now classical, can be computed as a certain intersection product,
which in fact coincides with $a_G(\sigma)$.
If we specialize over $\infty'\in\mb{P}'$, we get the nearby cycle, and the conjecture follows.
In general $\mc{L}(ft')$ is not universally locally acyclic, and we need small additional arguments (cf.\ Theorem \ref{extula}).

The author first learned about the conjecture of Serre in the graduate course lecture given by Takeshi Saito
at the Komaba campus of the University of Tokyo in the winter semester of 2004.
Takeshi has always been generous enough to share wonderful problems and ideas with me,
without which my mathematical life would not have been so fruitful.
It is my great honor and pleasure to dedicate this article to Takeshi on the occasion of his 60th birthday, with admiration and respect.

\subsection*{Acknowledgment}\mbox{}\\
The author also wishes to thank Enlin Yang and Ofer Gabber:
The author learned the results of Lu and Zheng from Yang, and Gabber pointed out a gap in the proof of Theorem \ref{extula}.
He is grateful to the referee for reading the manuscript carefully and for valuable comments.
This work is supported by JSPS KAKENHI Grant Numbers 20H01790, 23K20202.

\subsection*{Conventions}
\begin{itemize}
 \item  In this article, we fix prime numbers $p$, $\ell$ coprime to each other.
	To simplify the presentation, we assume that all the schemes in this article are over $\mb{F}_p$.
	However, the results of \S\ref{sec1} except for Theorem \ref{thm} are valid without this assumption (with standard assumptions on the coefficients).

 \item  Assume we are given morphisms of schemes $X\rightarrow S$, $T\rightarrow S$.
	The base change $X\times_S T$ is often denoted by $X_T$.
	Likewise, if $\mc{F}$ is a sheaf (resp.\ complex, {\it etc.}) on $X$,
	the pullback to $X_T$ is denoted by $\mc{F}_T$,	if no confusion may arise.

 \item  For a scheme $X$ and its geometric point $x$, we denote by $X_{(x)}$ (the spectrum of) the strict henselization of $X$ at $x$.
\end{itemize}

\section{Construction of Artin representation}
\label{sec1}
\subsection{}
\label{absthrct}
One of the main tools of the proof is the relative Lefschetz-Verdier pairing of Lu and Zheng \cite{LZ}.
We recall their results very briefly.
We start by recalling some abstract nonsense.
Assume we are given a symmetric monoidal $2$-category $\mbf{C}$.
The formalism of categorical traces (cf.\ \cite[\S1]{LZ}) yields the trace functor
(between $(2,1)$-categories, namely $2$-categories whose $2$-morphisms are invertible)
\begin{equation*}
 \tau_{\mbf{C}}\colon\mr{End}(\mbf{C})\rightarrow\Omega\mbf{C}.
\end{equation*}
Here, $\mr{End}(\mbf{C})$ is a $(2,1)$-category whose objects consist of pairs $(X,e)$ where $X$ is a dualizable object of $\mbf{C}$ and $e\in\mr{End}_{\mbf{C}}(X)$.
The morphisms in $\mr{End}(\mbf{C})$ is not important for us.
See \cite[\S1.2]{LZ} for the details.
The category $\Omega\mbf{C}$ is the category (not just $(2,1)$-category) of endomorphisms $\mr{End}_{\mbf{C}}(\mbf{1}_{\mbf{C}})$.

The trace functor behaves functorially with respect to symmetric monoidal functors.
Let $F\colon\mbf{C}\rightarrow\mbf{D}$ be a symmetric monoidal functors between symmetric monoidal 2-categories.
Then, we have the following 2-commutative diagram of functors:
\begin{equation*}
 \xymatrix@C=50pt{
  \mr{End}(\mbf{C})\ar[r]^-{\tau_{\mbf{C}}}\ar[d]_{\mr{End}(F)}&\Omega\mbf{C}\ar[d]^{\Omega F}\\
 \mr{End}(\mbf{D})\ar[r]^-{\tau_{\mbf{D}}}&\Omega\mbf{D},
  }
\end{equation*}
which is remarked in \cite[Rem 1.9]{LZ}.

\subsection{}
\label{corprep}
Let us introduce 2-categories to which Lu and Zheng applied the above general theory.
Let $\Lambda$ be a torsion ring such that $m\Lambda=0$ for some integer $m$ which is prime to $p$.
Let $S$ be a noetherian scheme.
We consider the $2$-category $\Corr(S)$ of correspondences.
The objects are separated $S$-schemes of finite type.
A morphism $X\rightarrow Y$ in $\Corr(S)$ is a diagram $X\leftarrow Z\rightarrow Y$ of separated $S$-schemes of finite type.
We do not recall the definition of $2$-morphisms, and see \cite[\S 2.2]{LZ} for the details.
Let $\Cat$ be the $2$-category of (small) categories.
We have the functor $F_S\colon\Corr(S)^{\mr{2-op}}\rightarrow\Cat$ such that for $X\in\Corr(S)$, $F_S(X)$
is defined to be $D_{\mr{ctf}}(X,\Lambda)$.
For a correspondence $c\colon X\xleftarrow{f}Z\xrightarrow{g}Y$, we put $F_S(c):=g_!f^*\colon D(X)\rightarrow D(Y)$.
Let $\mc{C}_{S,\Lambda}\rightarrow\Corr(S)$ be the Grothendieck (op)fibration corresponding to the functor (cf.\ \cite[\S1.3]{LZ}).
Concretely, objects of $\mc{C}_{S,\Lambda}$ are the pairs $(X,\mc{F})$ where $X$ is an $S$-scheme and $\mc{F}\in D_{\mr{ctf}}(X,\Lambda)$.
If no confusion may arise, we denote $\mc{C}_{S,\Lambda}$ simply by $\mc{C}_S$.

The category $\mc{C}_{S,\Lambda}$ admits a symmetric monoidal structure given by $(X,\mc{F})\otimes(Y,\mc{G}):=(X\times_S Y,\mc{F}\boxtimes_S\mc{G})$.
Assume we are given a morphism of noetherian schemes $g\colon T\rightarrow S$.
As in the proof \cite[Prop 2.26]{LZ}, we have the pullback functor $g^*\colon\mc{C}_S\rightarrow\mc{C}_T$ sending $(X,\mc{F})$ to $(X\times_ST,g^*\mc{F})$,
which is also shown to preserve symmetric monoidal structures.

\subsection{}
After these preparations, we may apply the general construction of \ref{absthrct}
to the symmetric monoidal 2-category $\mc{C}_S=\mc{C}_{S,\Lambda}$ introduced in \ref{corprep}.
This yields the trace functor $\tau_S\colon\mr{End}(\mc{C}_S)\rightarrow\Omega\mc{C}_S$ for a noetherian scheme $S$.
Furthermore, assume we are given a morphism of noetherian schemes $g\colon T\rightarrow S$.
Since the functor $g^*\colon\mc{C}_S\rightarrow\mc{C}_T$ preserves symmetric monoidal structure as we discussed in \ref{corprep},
we have the following 2-commutative diagram by \ref{absthrct}:
\begin{equation*}
 \xymatrix@C=50pt{
  \mr{End}(\mc{C}_{S,\Lambda})\ar[r]^-{\tau_S}\ar[d]_{\mr{End}(g^*)}&\Omega\mc{C}_{S,\Lambda}\ar[d]^{\Omega g^*}\\
 \mr{End}(\mc{C}_{T,\Lambda})\ar[r]^-{\tau_T}&\Omega\mc{C}_{T,\Lambda}.
  }
\end{equation*}
In order to interpret this diagram using a more familiar language, let us have a closer look at the objects appearing in the diagram.
Giving an object of $\mr{End}(\mc{C}_{S,\Lambda})$ is equivalent to giving a dualizable object $(X,\mc{F})$ and an endomorphism
$(z,w)\colon(X,\mc{F})\rightarrow(X,\mc{F})$ in $\mc{C}_{S,\Lambda}$.
More explicitly, this is equivalent to giving a correspondence $z\colon X\xleftarrow{f}Z\xrightarrow{g}X$,
a dualizable object $\mc{F}\in D_{\mr{ctf}}(X,\Lambda)$, and a homomorphism $w\colon f^*\mc{F}\rightarrow g^!\mc{F}$ in $D_{\mr{ctf}}(Z,\Lambda)$.
Furthermore, an object of $\Omega\mc{C}_{S,\Lambda}$ can be regarded as a pair $(Y,\alpha)$ where $Y\in\Corr(S)$ and
\begin{equation*}
 \alpha\in\mr{H}^0(Y,\mc{K}_{Y/S}):=
  \mr{Hom}_{D(Y,\Lambda)}\bigl(\pi^*\Lambda_S,\pi^!\Lambda_S\bigr),
\end{equation*}
where $\pi\colon Y\rightarrow S$ is the structural morphism.
Assume we are given an object $(z,w)$ of $\mr{End}(\mc{C}_{S,\Lambda})$ as above.
By the concrete description \cite[Rem 2.15]{LZ},
the categorical trace $\tau_S(z,w)$ actually belongs to $\mr{H}^0(X^z,\mc{K}_{X^z/S})$ where $X^z:=Z\times_{(f,g),X\times X,\Delta_X}X$.

Up till now, the consequences follow more or less formally once the formalism is set up properly.
However, these results are for dualizable objects of $\mc{C}_{S,\Lambda}$, which are defined abstractly.
One of the surprising discoveries of Lu and Zheng is that the dualizable objects in $\mc{C}_{S,\Lambda}$ are in fact the complexes that we are familiar with.
Namely, an object $(X,\mc{F})$ in $\mc{C}_{S,\Lambda}$ (note in particular that $\mc{F}\in D_{\mr{ctf}}(X,\Lambda)$)
is dualizable exactly when $\mc{F}$ is locally acyclic\footnote{
It is more common to say that the morphism $h\colon X\rightarrow S$ is locally acyclic with respect to $\mc{F}$
(cf.\ [SGA $4\frac{1}{2}$, Th.\ finitude, Def 2.12]), but in this article, we say that $\mc{F}$ is $h$-locally acyclic.
We also say that $\mc{F}$ is locally acyclic over $S$ if no confusion may arise.}
over $S$ by \cite[Thm 2.16]{LZ}.
What we have observed can be summarized as follows:

\begin{thm*}
 \label{bc}
 Recall that $\Lambda$ is a torsion ring such that $m\Lambda=0$ for some integer $m$ which is prime to $p$.
 Let $g\colon T\rightarrow S$ be a morphism between noetherian schemes,
 and assume we are given an endomorphism $(z,w)\colon(X,\mc{F})\rightarrow(X,\mc{F})$ in $\mc{C}_{S,\Lambda}$ such that $\mc{F}$ is {\em locally acyclic} over $S$.
 Then we have $g^*\tau_S(z,w)=\tau_T\bigl(g^*(z,w)\bigr)$ where $g^*$ are the ``base change'' homomorphisms
 \begin{equation*}
  g^*\colon\mr{H}^0(X^z,\mc{K}_{X^z/S})\rightarrow\mr{H}^0(X^z_T,\mc{K}_{X^z_T/T}),\qquad
   g^*\colon\mr{End}(\mc{C}_{S,\Lambda})\rightarrow\mr{End}(\mc{C}_{T,\Lambda}).
 \end{equation*}
\end{thm*}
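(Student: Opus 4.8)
The commutativity of the diagram of functors displayed above the theorem has in effect already been established: it follows from \cite[Rem 1.9]{LZ}, since $g^*\colon\mc{C}_S\rightarrow\mc{C}_T$ is symmetric monoidal by \cite[Prop 2.26]{LZ}. Note also that $g^*$ preserves dualizable objects, because by \cite[Thm 2.16]{LZ} dualizability of $(X,\mc{F})$ over $S$ amounts to universal local acyclicity of $\mc{F}$ over $S$, which is stable under the base change $T\rightarrow S$, so that $(X_T,\mc{F}_T)$ is dualizable over $T$. What the theorem adds is the translation of this commutativity into an identity of elements, obtained by making the two vertical arrows of the square explicit. The plan is to carry out that translation. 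Throughout, write the correspondence $z$ as $X\xleftarrow{f}Z\xrightarrow{h}X$, calling its second leg $h$ rather than $g$ to avoid a clash with $g\colon T\rightarrow S$, so that $w\colon f^*\mc{F}\rightarrow h^!\mc{F}$.

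Recall, from the formalism of categorical traces in \cite[\S1]{LZ}, that a symmetric monoidal functor $\Phi\colon\mbf{C}\rightarrow\mbf{D}$ of symmetric monoidal $2$-categories carries a duality datum to a duality datum, and hence carries the evaluation/coevaluation zig-zag computing a categorical trace to the corresponding zig-zag in $\mbf{D}$; this is exactly what makes the square commute. Applied to $\Phi=g^*$, it gives $\Omega(g^*)\circ\tau_S=\tau_T\circ\mr{End}(g^*)$, and it remains to identify $\mr{End}(g^*)$ with the map $g^*\colon\mr{End}(\mc{C}_{S,\Lambda})\rightarrow\mr{End}(\mc{C}_{T,\Lambda})$ and $\Omega(g^*)$ with the base-change map $g^*\colon\mr{H}^0(X^z,\mc{K}_{X^z/S})\rightarrow\mr{H}^0(X^z_T,\mc{K}_{X^z_T/T})$ of the statement.

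For the first identification one simply unwinds the construction of $g^*\colon\mc{C}_S\rightarrow\mc{C}_T$: the endomorphism $g^*(z,w)$ of $(X_T,\mc{F}_T)$ is given by the base-changed correspondence $X_T\xleftarrow{f_T}Z_T\xrightarrow{h_T}X_T$ together with the composite $f_T^*\mc{F}_T\xrightarrow{\sim}(f^*\mc{F})_T\rightarrow(h^!\mc{F})_T\rightarrow h_T^!\mc{F}_T$ whose middle arrow is the pullback of $w$ along $Z_T\rightarrow Z$ and whose last arrow is the base-change morphism for $h^!$. For the second, one uses the identification $\Omega\mc{C}_{S,\Lambda}=\mr{End}_{\mc{C}_{S,\Lambda}}(\mbf{1})$, with unit $\mbf{1}=(S,\Lambda_S)$, under which an object becomes a pair $(Y,\alpha)$ with $\pi\colon Y\rightarrow S$ and $\alpha\in\mr{H}^0(Y,\mc{K}_{Y/S})=\mr{Hom}_{D(Y,\Lambda)}(\pi^*\Lambda_S,\pi^!\Lambda_S)$; then $g^*$ sends $(Y,\alpha)$ to $(Y_T,g^*\alpha)$, where $g^*\alpha$ is the composite $\pi_T^*\Lambda_T\xrightarrow{\sim}(\pi^*\Lambda_S)_T\rightarrow(\pi^!\Lambda_S)_T\rightarrow\pi_T^!\Lambda_T$, with $\pi_T\colon Y_T\rightarrow T$, whose middle arrow is the pullback of $\alpha$ and whose last arrow is the base-change morphism for $\pi^!$. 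Applied to $\Lambda_S$, this last arrow is exactly the classical base-change morphism $(\mc{K}_{Y/S})_T\rightarrow\mc{K}_{Y_T/T}$ defining the map $g^*\colon\mr{H}^0(Y,\mc{K}_{Y/S})\rightarrow\mr{H}^0(Y_T,\mc{K}_{Y_T/T})$; taking $Y=X^z$, so that $(Y,\alpha)=(X^z,\tau_S(z,w))$ by the concrete formula of \cite[Rem 2.15]{LZ}, then gives the theorem.

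I expect the main obstacle to be purely a matter of bookkeeping: one must check that each base-change comparison invoked in the previous paragraph is literally the arrow that occurs in the duality/trace zig-zag transported along $g^*$, so that no normalization or sign discrepancy arises. This is a compatibility check between base-change morphisms and the explicit descriptions of $g^*$ (\cite[Prop 2.26]{LZ}) and of $\tau$ (\cite[Rem 2.15]{LZ}), presenting no conceptual difficulty beyond that; it is also the sense in which the statement is implicit in \cite{LZ}.
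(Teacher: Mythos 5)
Your proposal is correct and takes essentially the same route as the paper: the paper likewise deduces the commutativity of the square from \cite[Rem 1.9]{LZ} together with the fact that $g^*\colon\mc{C}_S\rightarrow\mc{C}_T$ is symmetric monoidal (\cite[Prop 2.26]{LZ}), and then reads off the stated identity by unwinding the vertical arrows as the base-change maps. Your extra bookkeeping (preservation of dualizability via \cite[Thm 2.16]{LZ} and the explicit identification of $\Omega(g^*)$ with the base-change morphism $(\mc{K}_{Y/S})_T\rightarrow\mc{K}_{Y_T/T}$) only makes explicit what the paper leaves implicit.
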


\subsection{}
\label{limlem}
We need the following (small) generalization of [SGA 4, Exp.\ IX, Cor 2.7.4].
This should be well-known to experts, but we could not find a suitable reference,
so we include a proof for the sake of completeness.
\begin{lem*}
 Assume that $\Lambda$ is a noetherian ring.
 Let $I$ be a {\normalfont(}small{\normalfont)} filtered category,
 and consider a functor $X_{\bullet}\colon I^{\mr{op}}\rightarrow\mr{Sch}^{\mr{coh}}$,
 where $\mr{Sch}^{\mr{coh}}$ is the category of coherent schemes {\normalfont(}{i.e.\ }quasi-compact quasi-separated schemes{\normalfont)}.
 Assume that for any morphism $i\rightarrow j$, $X_j\rightarrow X_i$ is affine.
 Put $X_\infty:=\invlim X_\bullet$.
 We have a canonical equivalence
 \begin{equation*}
  \rho\colon
  2\mr{-}\indlim D^{\mr{b}}_{\mr{c}}(X_i,\Lambda)\xrightarrow{\sim}
   D_{\mr{c}}^{\mr{b}}(X_{\infty},\Lambda).
 \end{equation*}
\end{lem*}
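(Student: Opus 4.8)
The plan is to prove directly that $\rho$ is fully faithful and essentially surjective, reducing in both cases, by induction on the cohomological amplitude, to a statement about a single constructible sheaf where the analogous result for the abelian category of constructible $\Lambda$-sheaves — the sheaf-level form of \cite[Exp.\ IX, 2.7.4]{SGA4}, together with Grothendieck's limit theorems for finitely presented schemes \cite[EGA IV, \S8]{EGA} — is available, as is the fact that the étale cohomology of a coherent scheme commutes with cofiltered limits of coherent schemes along affine transition maps \cite[Exp.\ VII, 5.7--5.8]{SGA4}. Two elementary remarks, both using that $\Lambda$ is noetherian, organize the argument: (i) constructible $\Lambda$-sheaves on a coherent scheme $Y$ form an abelian subcategory of $\mathrm{Mod}(Y_{\mathrm{et}},\Lambda)$ and are generated by the sheaves $j_!\Lambda_U$ with $j\colon U\to Y$ étale and $U$ coherent, so every constructible sheaf admits a resolution $G_\bullet\to F$, bounded above but possibly unbounded below, by finite direct sums of such $j_!\Lambda_U$; and (ii) $\mathrm{Ext}^q_Y(j_!\Lambda_U,-)\cong\mathrm{H}^q(U,-)$ by adjunction, which commutes with filtered colimits since $U$ is quasi-compact quasi-separated.

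For full faithfulness, fix objects represented at a common index, $(X_i,\mathcal{F}_i)$ and $(X_i,\mathcal{G}_i)$ with $\mathcal{F}_i,\mathcal{G}_i\in D^{\mathrm b}_{\mathrm c}(X_i,\Lambda)$; writing $\mathcal{F}_j=f_{ij}^*\mathcal{F}_i$ etc.\ and $f_i\colon X_\infty\to X_i$, $f_{ij}\colon X_j\to X_i$ for the projections, one must show
\[
 \indlim_{j\ge i}\mathrm{Hom}_{D(X_j)}\bigl(\mathcal{F}_j,\mathcal{G}_j[n]\bigr)\xrightarrow{\ \sim\ }\mathrm{Hom}_{D(X_\infty)}\bigl(\mathcal{F}_\infty,\mathcal{G}_\infty[n]\bigr)
\]
for all $n$ (the case $n=0$ being full faithfulness, the others used below). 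By adjunction the right-hand side is $\mathrm{Hom}_{D(X_i)}(\mathcal{F}_i,Rf_{i*}f_i^*\mathcal{G}_i[n])$ and the $j$-th term of the colimit is $\mathrm{Hom}_{D(X_i)}(\mathcal{F}_i,Rf_{ij*}f_{ij}^*\mathcal{G}_i[n])$. Checking on strict localizations of $X_i$ — which stay coherent after base change along the affine maps $X_j\to X_i$ — the limit theorem gives $Rf_{i*}f_i^*\mathcal{G}_i\simeq\indlim_{j\ge i}Rf_{ij*}f_{ij}^*\mathcal{G}_i$ in $D^{+}(X_i,\Lambda)$, all of whose terms lie in a fixed $D^{\ge a}(X_i,\Lambda)$. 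It remains to see that $\mathrm{Hom}_{D(X_i)}(\mathcal{F}_i,-)$ sends this filtered colimit of uniformly bounded-below complexes to the colimit of the $\mathrm{Hom}$'s. Since $\mathcal{F}_i$ is bounded, a truncation triangle replaces each target by a uniformly bounded truncation; inductions along truncation triangles in the first and then the second variable, using the long exact $\mathrm{Ext}$-sequences and exactness of filtered colimits, reduce the claim to: $\mathrm{Ext}^q_{X_i}(F,-)$ commutes with filtered colimits of sheaves for $F$ a single constructible sheaf. Choosing a resolution $G_\bullet\to F$ as in (i), the spectral sequence $E_1^{p,q}=\mathrm{Ext}^q_{X_i}(G_p,-)\Rightarrow\mathrm{Ext}^{p+q}_{X_i}(F,-)$ converges, has finitely many nonzero terms in each total degree, and each $E_1^{p,q}$ is a finite sum of functors $\mathrm{H}^q(U,-)$, which commute with filtered colimits by (ii); hence so does $\mathrm{Ext}^q_{X_i}(F,-)$. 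Undoing the adjunction identifies each colimit term with $\mathrm{Hom}_{D(X_j)}(\mathcal{F}_j,\mathcal{G}_j[n])$.

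For essential surjectivity, take $\mathcal{F}\in D^{\mathrm b}_{\mathrm c}(X_\infty,\Lambda)$ with cohomology in degrees $[b_0,b_1]$ and induct on $b_1-b_0$. If $b_1=b_0$, then $\mathcal{F}=F[-b_0]$ for a constructible sheaf $F$ on $X_\infty$, which descends to some $X_i$ by the sheaf-level \cite[Exp.\ IX, 2.7.4]{SGA4}: a constructible sheaf is described by a finite stratification into constructible locally closed subschemes and finite locally constant sheaves on the strata, all data of finite presentation over $X_\infty=\invlim X_\bullet$, hence spreading out by \cite[EGA IV, \S8]{EGA} (the passage from $\mathbb{Z}/m$- to $\Lambda$-coefficients being formal). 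For the inductive step, apply the previous paragraph to the outer terms of the triangle $\tau^{\le b_1-1}\mathcal{F}\to\mathcal{F}\to\mathcal{H}^{b_1}(\mathcal{F})[-b_1]\to$ to descend them, after increasing $i$, to $f_i^*\mathcal{A}_i$ and $f_i^*(\mathcal{B}_i[-b_1])$; the connecting morphism lies in $\mathrm{Hom}_{D(X_\infty)}(f_i^*\mathcal{B}_i,f_i^*\mathcal{A}_i[b_1+1])$, which by full faithfulness for $n=b_1+1$ comes from $\mathrm{Hom}_{D(X_j)}(\mathcal{B}_j,\mathcal{A}_j[b_1+1])$ for some $j\ge i$; completing a descended connecting morphism to a triangle over $X_j$ produces an object of $D^{\mathrm b}_{\mathrm c}(X_j,\Lambda)$ whose pullback to $X_\infty$ is isomorphic to $\mathcal{F}$. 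One then checks routinely that these constructions are compatible with $\rho$ and furnish a quasi-inverse.

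I expect the main obstacle to be the full-faithfulness step, specifically making $\mathrm{Hom}_{D(X_i)}(\mathcal{F}_i,-)$ commute with the filtered colimits coming from $Rf_{i*}f_i^*$: over a coherent but non-noetherian base the finiteness theorems for $R\mathcal{H}om$ of constructible complexes are unavailable, so one cannot route the argument through constructibility of internal $\mathrm{Ext}$-sheaves; instead the pseudo-coherence of constructible sheaves — exactly where $\Lambda$ noetherian enters — together with quasi-compactness of the étale schemes $U$ keeps the relevant $\mathrm{Ext}$-complexes and spectral sequences finite in each degree. A secondary point requiring care is the identification $Rf_{i*}f_i^*\mathcal{G}_i\simeq\indlim_j Rf_{ij*}f_{ij}^*\mathcal{G}_i$, a relative form of the limit theorem to be verified on strict localizations; the boundedness hypotheses ensure all complexes in play are bounded below, so that this colimit is well behaved.
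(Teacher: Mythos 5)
Your proposal is correct, and both halves rest on the same pillars as the paper's proof: the reduction of a constructible sheaf to sheaves $j_!\Lambda_U$ with $j\colon U\to X_i$ quasi-compact \'etale (SGA 4, Exp.\ IX, 2.7, where noetherianness of $\Lambda$ enters), and the commutation of cohomology of coherent schemes with filtered limits along affine transitions; your essential-surjectivity step (induction on amplitude, descend the two truncations, descend the connecting morphism by full faithfulness, take the cone) is exactly the paper's argument, including the base case via SGA 4, Exp.\ IX, 2.7.4. The genuine difference is the organization of full faithfulness. The paper compares the $\mathrm{Hom}$-groups over the varying schemes directly: after d\'evissage and the $(j_!,j^*)$ adjunction the statement becomes $\varinjlim_j H^n(U\times_{X_i}X_j,\mathcal{G})\xrightarrow{\sim}H^n(U\times_{X_i}X_\infty,\mathcal{G})$, which is precisely SGA 4, Exp.\ VI, 8.7.9, and one is done. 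You instead push down to the fixed base $X_i$ by adjunction, establish $Rf_{i*}f_i^*\mathcal{G}_i\simeq\varinjlim_j Rf_{ij*}f_{ij}^*\mathcal{G}_i$ stalkwise, and then need the extra statement that $\mathrm{Hom}_{D(X_i)}(\mathcal{F}_i,-)$ commutes with this colimit of uniformly bounded-below complexes, which you again reduce (resolution by sums of $j_!\Lambda_U$ plus the first-quadrant spectral sequence) to commutation of $H^q(U,-)$ with filtered colimits. This detour is sound --- the truncation bound coming from boundedness of $\mathcal{F}_i$ and the uniform lower bound on the pushforwards is the point that needs checking, and you check it --- but it is longer than necessary and forces you to make sense of a filtered colimit of objects of $D^+(X_i)$, which the paper's varying-base comparison avoids entirely; when writing it up, phrase that colimit at the level of cohomology sheaves (with the stalk identification $(R^qf_{ij*}f_{ij}^*\mathcal{G})_{\bar x}\cong H^q(X_j\times_{X_i}X_{i(\bar x)},\mathcal{G})$ made explicit), and note that once full faithfulness is proved, essential surjectivity already yields the equivalence, so no explicit quasi-inverse need be constructed.
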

\begin{proof}
 We denote $D(Z,\Lambda)$ by $D(Z)$.
 First, let us show the full faithfulness.
 We take $\mc{F},\mc{G}$ in $D^{\mr{b}}_{\mr{c}}(X_i)$ for some $i\in I$, and we must show that the canonical homomorphism
 \begin{equation*}
  \indlim_{j\in I_{/i}}\mr{Hom}_{D(X_j)}(\pi_j^*\mc{F},\pi_j^*\mc{G})
   \rightarrow
   \mr{Hom}_{D(X_\infty)}(\pi_{\infty}^*\mc{F},\pi_{\infty}^*\mc{G})
 \end{equation*}
 is an isomorphism where $\pi_j\colon X_j\rightarrow X_i$ is the structural morphism.
 In fact, we show this for any $\mc{F}\in D^-_{\mr{c}}(X_i)$ and $\mc{G}\in D^+(X_i)$.
 We may replace $\mc{F}$ by $\tau^{>(-N)}\mc{F}$ for a large enough $N$ because $\pi_{\star}^*$, where $\star\in\{j,\infty\}$, is exact and $\mc{G}$ is bounded below.
 By shifting $\mc{G}$, we may assume that $\H^i\mc{F}=0$ for $i\not\in[0,n]$ for some $n\geq0$.
 By considering the exact triangle $\tau^{<n}\mc{F}\rightarrow\mc{F}\rightarrow\H^n\mc{F}[-n]\rightarrow$ and using the induction on $n$,
 we may assume that $\mc{F}$ is a constructible $\Lambda$-module and $\mc{G}$ is a shift of a $\Lambda$-module.
 Thus, it suffices to show that the canonical homomorphism
 \begin{equation*}
  \indlim_{j\in I_{/i}}\mr{Ext}_{\Lambda_{X_j}}^n(\pi_j^*\mc{F},\pi_j^*\mc{G})
   \rightarrow
   \mr{Ext}_{\Lambda_{X_\infty}}^n(\pi_{\infty}^*\mc{F},\pi_{\infty}^*\mc{G})
 \end{equation*}
 is an isomorphism for any $n$, constructible $\Lambda$-module $\mc{F}$, and $\Lambda$-module $\mc{G}$.
 Since $\mc{F}$ is constructible, there exist a quasi-compact \'{e}tale morphism $j\colon U\rightarrow X_i$, a constructible $\Lambda$-module $\mc{K}$,
 and a short exact sequence $0\rightarrow\mc{K}\rightarrow j_!\Lambda_U\rightarrow\mc{F}\rightarrow0$ by [SGA 4, Exp.\ IX, Prop 2.7].
 By using the induction on $n$, we are reduced to showing the isomorphism for $\mc{F}=j_!\Lambda_U$.
 By adjunction $(j_!,j^*)$, the claim follows by [SGA 4, Exp.\ VI, Cor 8.7.9], and thus $\rho$ is fully faithful.

 Let us show the essential surjectivity.
 Take $\mc{F}\in D^{\mr{b}}_{\mr{c}}(X_\infty)$.
 There exist integers $b\leq a$ such that $\mc{H}^i\mc{F}=0$ for $i\not\in[b,a]$.
 Without loss of generality, we may assume that $b=0$.
 We use the induction on $a$.
 If $a=0$, this is a consequence of [SGA 4, Exp.\ IX, Cor 2.7.4].
 For general $a$, consider the exact triangle $\tau^{<a}\mc{F}\rightarrow\mc{F}\rightarrow(\mc{H}^a\mc{F})[-a]\rightarrow$.
 The connecting morphism yields an element $e_{\mc{F}}\in\mr{Hom}_{D(X_\infty)}\bigl((\mc{H}^a\mc{F})[-a-1],\tau^{<a}\mc{F}\bigr)$,
 and the exact triangle induces an isomorphism $\mr{Cone}(e_{\mc{F}})\cong\mc{F}$.
 By induction hypothesis, there exist $\mc{G}$, $\mc{G}'\in D^{\mr{b}}_{\mr{c}}(X_i)$
 and isomorphisms $\pi_{\infty}^*\mc{G}\cong\tau^{<a}\mc{F}$, $\pi_{\infty}^*\mc{G}'\cong(\mc{H}^a\mc{F})[-a]$ for some $i$.
 By the full faithfulness of $\rho$ that we have already proven, there exists $j\in I_{/i}$ and a homomorphism
 $\phi\colon\pi_j^*\mc{G}'[-1]\rightarrow\pi_j^*\mc{G}$ whose pullback to $X_\infty$ is isomorphic to $e_{\mc{F}}$.
 Let $\pi'\colon X_{\infty}\rightarrow X_j$ be the morphism.
 Then we have $\pi'^*\mr{Cone}(\phi)\cong\mr{Cone}(\pi'^*\phi)\cong\mr{Cone}(e_{\mc{F}})\cong\mc{F}$.
 Thus, $\mc{F}$ belongs to the essential image of $\rho$ as required.
\end{proof}

\subsection{}
The sheaf $\mc{L}(ft')$ appeared in Introduction is usually {\em not} locally acyclic as it is.
This prevents us from applying Theorem \ref{bc} directly to $\mc{L}(ft')$.
To remedy this, we need the following theorem, which roughly asserts that if we are given a morphism of finite type $f\colon X\rightarrow S$,
and a constructible sheaf $\mc{F}$ on the generic fiber of $f$, there exists an alteration $T$ of $S$ so that $\mc{F}$
extends to a constructible complex on $X_T$ which is universally locally acyclic.

\begin{thm*}
 \label{extula}
 Let $\Lambda$ be a finite ring in which $p$ is invertible,
 $S$ a coherent scheme with finitely many irreducible components, and $U\subset S$ be an open subscheme.
 Let $f\colon X\rightarrow S$ be a morphism of finite presentation,
 and let $\mc{F}$ be an object of $D_{\mr{ctf}}(X_U,\Lambda)$ which is $f_U$-universally locally acyclic.
 Let $\sigma\colon X\rightarrow X$ be an $S$-endomorphism,
 and assume that we are given a homomorphism $\alpha\colon\sigma_U^*\mc{F}\rightarrow\mc{F}$.
 Then there exists a surjective alteration $T\rightarrow S$, a complex $\mc{G}\in D_{\mr{ctf}}(X_T,\Lambda)$,
 and a homomorphism $\beta\colon\sigma_T^*\mc{G}\rightarrow\mc{G}$ such that the following hold:
 \begin{itemize}
  \item The constructible complex $\mc{G}$ is $f_T$-universally locally acyclic.

  \item Let $V:=T\times_S U$. Then $\mc{G}$ and $\beta$ coincides with $\mc{F}_V$ and $\alpha_V$.
 \end{itemize}
\end{thm*}

Since the proof is rather technical, let us explain some ideas behind the proof for the convenience of the reader.
The fundamental idea, which goes back to Deligne, is that when we take a large enough ``cover'', then pathological behaviors of nearby cycles in family disappear.
Following this philosophy, we take the inverse limit $\overline{S}$ over all such covers, and show the lemma over such a ``huge'' space $\overline{S}$.
In our situation, $\overline{S}$ becomes an absolutely integrally closed scheme ({\em i.e.}, a normal scheme whose field of fraction is algebraically closed),
and we first show the lemma in this case.
This case can be treated by simple applications of results of Orgogozo.
In this case, a very similar result has already appeared in \cite[Thm 4.1]{HS}.
By another application of Orgogozo's result, we may show that the extension is in fact constructible and descend to get the desired extension
for a usual (more precisely, finite type) cover of $S$.

\begin{proof}
 For a topos $\mc{T}$, we omit $D(\mc{T},\Lambda)$ by $D(\mc{T})$.
 By considering the irreducible components, we may assume that $S$ is integral with an {\em algebraic} geometric generic point $\xi$
 by the assumption that $S$ has finitely many irreducible components.
 By replacing $S$ by an alteration, we may assume that $\mc{F}_!:=(X_U\hookrightarrow X)_!(\mc{F})$,
 is $f$-good (in the sense of Illusie \cite[Def 1.5]{I}) by \cite[Thm 2.1]{O}.
 Let $\overline{S}$ be the normalization of $S$ in $\xi$, and put $\overline{U}:=U\times_S\overline{S}$.
 Note that $\overline{S}$ is coherent as well.
 Let $\overline{f}:=f\times_S\overline{S}$ and $j\colon X_\xi\rightarrow X_{\overline{S}}$ be the canonical morphism.

 First, we claim that $\mr{R}j_*\mc{F}_{\xi}$ is constructible.
 Indeed, let $\overline{\mc{F}}$ be the pullback of $\mc{F}$ on $X_{\overline{U}}$, and let $\overline{\mc{F}}_!$ its zero-extension to $X_{\overline{S}}$.
 Then the sheaf $\overline{\mc{F}}_!$ is constructible.
 By invoking \cite[Thm 8.1]{O}, we may find a modification $Z\rightarrow\overline{S}$ so that
 $(\overline{\mc{F}}_!)_Z$ is $f_Z$-good and $\Psi_{f_Z}((\overline{\mc{F}}_!)_Z)$ is constructible in $D^{\mr{b}}(X_Z\overleftarrow{\times}_ZZ)$.
 Note that $Z$ is also coherent.
 Thus, the restriction $\Psi_{f_Z}((\overline{\mc{F}}_!)_Z)|_{X_Z\overleftarrow{\times}_Z\xi}$ is constructible as well.
 Let $\overline{Z}\rightarrow Z$ be the normalization of $Z$ in $\xi$.
 Then the canonical morphism of topoi $\pi\colon X_{\overline{Z}}\overleftarrow{\times}_{\overline{Z}}\xi\rightarrow X_{\overline{Z}}$
 is an equivalence because $\pi$ induces a bijection on the set of (equivalence classes of) points of topoi,
 $(\pi_*\mc{H})_x\cong\mc{H}_{(x,\xi)}$ for any sheaf $\mc{H}$ on $X_{\overline{Z}}\overleftarrow{\times}_{\overline{Z}}\xi$
 and any point $x$ of $X_{\overline{Z}}$ by \cite[Exp.\ XI, Cor 2.3.2]{G}, and the topoi have enough points by \cite[\S9.1]{O}.
 By $f_{Z}$-goodness, the pullback of $\Psi_{f_Z}((\overline{\mc{F}}_!)_Z)$ to $X_{\overline{Z}}\overleftarrow{\times}_{\overline{Z}}\xi$ is isomorphic to
 $\Psi_{f_{\overline{Z}}}((\overline{\mc{F}}_!)_{\overline{Z}})|_{X_{\overline{Z}}\overleftarrow{\times}_{\overline{Z}}\xi}$,
 and the latter is further isomorphic to $\mr{R}\overline{j}_*\mc{F}_\xi$
 where $\overline{j}\colon X_\xi\rightarrow X_{\overline{Z}}$.
 Summing up, $\mr{R}\overline{j}_*\mc{F}_\xi$ is constructible.
 Now, since $\mc{F}_!$ is $f$-good, the canonical morphism $r^*\mr{R}j_*\mc{F}_\xi\rightarrow\mr{R}\overline{j}_*\mc{F}_\xi$
 is an isomorphism where $r\colon X_{\overline{Z}}\rightarrow X_{\overline{S}}$ is the canonical morphism.
 Thus, $r^*\mr{R}j_*\mc{F}_\xi$ is constructible.
 Since $r$ is surjective on geometric points and $X_{\overline{Z}}$ and $X_{\overline{S}}$ are coherent,
 $\mr{R}j_*\mc{F}_\xi$ is constructible as well (cf.\ \cite[\S9.1]{O}).
 Furthermore, it belongs to $D_{\mr{ctf}}(X_{\overline{S}})$ by [SGA 4, Exp.\ XVII, Thm 5.2.11].

 Since $\mr{R}j_*\mc{F}_{\xi}=:\mc{E}$ is constructible, by \cite[Thm 2.1 and Thm 7.1]{O},
 we may find an alteration $T\rightarrow\overline{S}$ such that $\mc{E}_T$ is $f_T$-good and $f_T$-cohomologically proper around each geometric point of $X_T$.
 Let us show that $\mc{E}_T$ is $f_T$-locally acyclic.
 Let $\overline{T}$ be an absolute integral closure of $T$ with generic point $\xi_{\overline{T}}$.
 Let $j_{\overline{T}}\colon X_{\xi_{\overline{T}}}\rightarrow X_{\overline{T}}$.
 Since $\mc{F}_!$ is $f$-good, we have the canonical isomorphism
 $\mc{E}_{\overline{T}}:=r'^*\mr{R}j_*\mc{F}_{\xi}\cong\mr{R}j_{\overline{T}*}\mc{F}_{\xi_{\overline{T}}}$ where $r'\colon X_{\overline{T}}\rightarrow X_{\overline{S}}$.
 We first prove that $\mc{E}_{\overline{T}}$ is locally acyclic over $\overline{T}$.
 Let $\zeta\rightsquigarrow t$ be a specialization map of geometric points on $\overline{T}$ and $x$ be a geometric point of $X_{\overline{T}}$ over $t$.
 We must show that the composite of the canonical morphisms
 \begin{equation*}
  \mc{E}_x\cong\mr{R}\Gamma\bigl((X_{\overline{T}})_{(x)},\mc{E}_{\overline{T}}\bigr)
   \xrightarrow{a}
   \mr{R}\Gamma\bigl((X_{\overline{T}})_{(x)}\times_{\overline{T}_{(t)}}\overline{T}_{(\zeta)},\mc{E}_{\overline{T}}\bigr)
   \xrightarrow{b}
   \mr{R}\Gamma\bigl((X_{\overline{T}})_{(x)}\times_{\overline{T}_{(t)}}\zeta,\mc{E}_{\overline{T}}\bigr)
 \end{equation*}
 is an isomorphism.
 Since $\overline{T}$ is absolutely integrally closed, the strict henselization coincides with the (Zariski) localization,
 and $a$ is an isomorphism since $\mc{E}_{\overline{T}}\cong\mr{R}j_{\overline{T}*}\mc{F}_{\xi_{\overline{T}}}$.
 The morphism $b$ is an isomorphism by the cohomological properness of $(X_{\overline{T}})_{(x)}\rightarrow\overline{T}_{(t)}$ with respect to $\mc{E}_{\overline{T}}$,
 and the $f_{\overline{T}}$-local acyclicity of $\mc{E}_{\overline{T}}$ follows.
 This in fact implies that $\mc{E}_T$ is $f_T$-locally acyclic.
 Indeed,
 since $\overline{T}\rightarrow T$ is integral, we have $(X_{\overline{T}})_{(x)}\cong(X_{T})_{(x)}\times_{T_{(t)}}\overline{T}_{(t)}$ by [EGA IV, 18.8.11].
 This implies that the ``Milnor fiber'' of $T$ at $\zeta$ and that of $\overline{T}$ at $\zeta$ coincide.
 Moreover, any specialization map on $T$ lifts to that on $\overline{T}$ since the morphism is integral and surjective, thus the local acyclicity of $\mc{E}_T$ follows.
 Since $\mc{E}_T$ is $f_T$-good, this implies that $\mc{E}_T$ is in fact $f_T$-universally locally acyclic by \cite[Ex 1.7 (b)]{I}.

 Let us finish the proof.
 By limit argument, there exists a finite morphism $S'\rightarrow S$ which factors $\overline{S}\rightarrow S$ and a surjective alteration $T'\rightarrow S'$
 such that $T'\times_{S'}\overline{S}\cong T$.
 Since $\mc{E}(:=\mr{R}j_*\mc{F}_\xi)$ is constructible, there exists a finite surjective morphism $T_0\rightarrow T'$
 which factors $T\rightarrow T'$ so that the $f_T$-universally locally acyclic complex $\mc{E}_T$ descends over $X_{T_0}$ by Lemma \ref{limlem}.
 Let $\mc{G}_0\in D^{\mr{b}}_{\mr{c}}(X_{T_0})$ be a descent.
 Since $T\rightarrow T_0$ is surjective and integral, $\mc{G}_0$ is $f_{T_0}$-universally locally acyclic by the coincidence of Milnor fibers of $f_{T_0}$ with that of $f_T$.
 Moreover, since $T\rightarrow T_0$ is surjective, $\mc{G}$ is automatically of finite Tor-dimension, and in particular, belongs to $D_{\mr{ctf}}$.
 The canonical morphism $\phi\colon\mc{F}_{\overline{T}_U}\rightarrow\mr{R}j_*\mc{F}_{\xi_{\overline{T}}}|_{X_{\overline{T}_U}}$ is an isomorphism
 since $\mc{F}_{\overline{T}_U}$ is locally acyclic.
 By another application of Lemma \ref{limlem}, there exists a finite surjective morphism $T_1\rightarrow T_0$ over which the isomorphism $\phi$
 descends to an isomorphism $(\mc{F})_{X_{T_1}\times_SU}\xrightarrow{\sim}(\mc{G}_0)_{X_{T_1}\times_SU}$.
 Likewise, $\alpha$ induces a homomorphism
 $\beta\colon\sigma^*_{\overline{U}}\mr{R}j_*\mc{F}_\xi|_{X_{\overline{U}}}\rightarrow\mr{R}j_*\mc{F}_\xi|_{X_{\overline{U}}}$
 compatible with $\phi$, and there exists a finite surjective morphism $T_2\rightarrow T_1$ over which $\beta$ descends.
 By putting $\mc{G}:=(\mc{G}_0)_{T_2}$, the conditions of the lemma are met.
\end{proof}

\begin{rem*}
 In the setting of the theorem, assume that $\mc{F}_\eta$ is $f$-good (cf.\ \cite[4.3]{A}) for any geometric generic point $\eta$ of $S$.
 Then, in fact, we may take the alteration $T\rightarrow S$ to be a finite morphism.
 This will be discussed in \cite{AG}.
\end{rem*}

\subsection{}
\label{mainthmcons}
Let $\Lambda$ be one of the following rings:
\begin{itemize}
 \item a ring such that $m\Lambda=0$ for some integer $m$ which is prime to $p$;

 \item a local regular finite $\mb{Z}_{\ell}$-algebra;

 \item a finite extension of $\mb{Q}_{\ell}$.
\end{itemize}
We fix a non-trivial additive character $\psi\colon\mb{F}_p\rightarrow\Lambda^{\times}$,
and let $\mc{L}_\Lambda$ be the Artin-Schreier sheaf on $\mr{Spec}(\mb{F}_p[t])$ associated with $\psi$.
We abbreviate $\mc{L}_{\mb{Q}_\ell(\zeta_p)}$ by $\mc{L}$ for simplicity.
When a base field $k$ of characteristic $p(>0)$ is fixed, we denote by $\mb{A}^1_x$ (resp.\ $\mb{P}^1_x$) the affine (resp.\ projective) line with coordinate $x$ over $k$.
We denote the composite $\mb{A}^1_x\times\mb{A}^1_y\rightarrow\mb{A}_t^1\rightarrow\mb{A}^1_{/\mb{F}_p}:=\mr{Spec}(\mb{F}_p[t])$ by $\mu$,
where the first morphism is the multiplication morphism, namely the morphism sending $(x,y)$ to $t=xy$, and second morphism is the canonical morphism.

Furthermore, let $G$ be a finite group, $X$ be a noetherian scheme on which $G$ acts, $x\in X$ be a closed point,
and assume that for each $\sigma\in G\setminus\{1\}$ the underlying set of $X^\sigma$ is equal to $\{x\}$.
Under this setup, we define $a_{X,G}(\sigma):=-\mr{length}_{\mc{O}_{X,x}}(\mc{O}_{X^{\sigma}})$,
and $a_{X,G}(1):=-\sum_{\xi\in G\setminus\{1\}}a_{X,G}(\xi)$.

\begin{thm*}
 \label{thm}
 Assume that $k$ is algebraically closed.
 Put $S:=\mb{P}^1_{t,(0)}$, $S':=\mb{P}^1_{t',(\infty')}$.
 Let $f\colon X\rightarrow S$ be a flat morphism of finite type, $x\in X$ a closed point,
 and let $G$ be a finite group which acts on $X$ by $S$-automorphisms.
 We further assume that $X$ is regular, $\bigcup_{\sigma\in G\setminus\{1\}} X^{\sigma}=\{x\}$ as underlying sets, and $f|_{X\setminus\{x\}}$ is smooth.

 We have the morphism $ft'\colon X\times \eta'\xrightarrow{f\times\mr{id}} S\times \eta'\rightarrow\mb{A}^1_t\times\mb{A}^1_{t'}\xrightarrow{\mu}\mb{A}^1_{/\mb{F}_p}$,
 where $\eta'$ is the generic point of $S'$, and $\mr{pr}'\colon X\times S'\rightarrow S'$ is the projection.
 We denote $j'_!(ft')^*\mc{L}$ by $\mc{L}(ft')$, where $j'\colon X\times\eta'\hookrightarrow X\times S'$ is the open immersion.
 Then, for $\sigma\in G\setminus\{1\}$, we have
 \begin{equation}
  \label{nbcyag}
  -a_{X,G}(\sigma)=
   \mr{Tr}\bigl(\sigma;\Psi_{\mr{pr}'}(\mc{L}(ft'))_{(x,\infty')}\bigr)
 \end{equation}
 in $\mb{Q}_\ell(\zeta_p)$, where $\Psi_{\mr{pr}'}$ is the nearby cycle functor\footnote{This is the nearby cycle functor denoted by $\mr{R}\Psi$ in [SGA 7, Exp.\ XIII].}
 with respect to $\mr{pr}'$.
\end{thm*}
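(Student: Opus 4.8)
The plan is to build an auxiliary family over $\mb{P}^1_{t'}$, feed it into the relative Lefschetz--Verdier formalism of Lu--Zheng, and interpolate between the fibre over $0'$ --- where the local term is a transparent intersection number equal to $-a_{X,G}(\sigma)$ --- and the fibre over $\infty'$, where it becomes the nearby-cycle trace in \eqref{nbcyag}. Fix $\sigma\in G\setminus\{1\}$.

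Put $\mb{P}':=\mb{P}^1_{t'}$, let $j\colon X\times\mb{A}^1_{t'}\hookrightarrow X\times\mb{P}'$ be the open immersion and $\widetilde{\mc{L}}:=j_!(ft')^*\mc{L}$; its restriction to $X\times S'$ is $\mc{L}(ft')$, while its restriction to $X\times\{0'\}$ is the constant sheaf $\Lambda_X$, as $ft'$ vanishes there and $\mc{L}$ is trivial near $0\in\mb{A}^1$. Since $G$ acts by $S$-automorphisms, $f\circ\sigma=f$, hence $ft'\circ(\sigma\times\mr{id})=ft'$, which provides a tautological isomorphism $\alpha\colon(\sigma\times\mr{id})^*\widetilde{\mc{L}}\xrightarrow{\sim}\widetilde{\mc{L}}$ restricting to the identity at every point over $x$. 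The sheaf $(ft')^*\mc{L}$ is lisse on $X\times\mb{A}^1_{t'}$, hence $\mr{pr}'$-universally locally acyclic over $\mb{A}^1_{t'}$. I would then apply Lemma \ref{extula} with $S=\mb{P}'$, $U=\mb{A}^1_{t'}$, the morphism $\mr{pr}'\colon X\times\mb{P}'\to\mb{P}'$, the sheaf $(ft')^*\mc{L}$, the $S$-endomorphism $\sigma\times\mr{id}$ and the map $\alpha$: this yields a surjective alteration $\pi\colon C\to\mb{P}'$, which we may take to be an integral proper curve over $k$, a $\mr{pr}'_C$-universally locally acyclic complex $\mc{G}\in D_{\mr{ctf}}(X\times C,\Lambda)$ and a map $\beta\colon(\sigma\times\mr{id})^*\mc{G}\to\mc{G}$, agreeing with $(ft')^*\mc{L}$ and $\alpha$ over $C^{\circ}:=\pi^{-1}(\mb{A}^1_{t'})$.

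By \cite[Thm 2.16]{LZ} the object $(X\times C,\mc{G})$ of $\mc{C}_{C,\Lambda}$ is dualizable; together with the graph correspondence of $\sigma$ and the map $\beta$ it defines an object of $\mr{End}(\mc{C}_{C,\Lambda})$ whose categorical trace $\tau_C$ lies in $\mr{H}^0\bigl((X\times C)^{\sigma},\mc{K}_{(X\times C)^{\sigma}/C}\bigr)$. Because $X^{\sigma}=\{x\}$ as a set, $(X\times C)^{\sigma}=X^{\sigma}\times C$ is topologically $C$, so this group is $\mr{H}^0(C,\Lambda)\cong\Lambda$ ($C$ being connected); thus $\tau_C$ is a single scalar, which I would compute at two closed points of $C$ by means of Theorem \ref{bc}. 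Over a point $\widetilde{0}\in C^{\circ}$ lying over $0'$ (its residue field is $k$): $\mc{G}|_{X\times\{\widetilde 0\}}=\Lambda_X$ and $\beta$ restricts to the identity, so $\tau_C$ equals the classical Lefschetz--Verdier local term of $(\Lambda_X,\sigma)$ at $x$; since $X$ is regular and $k$ is algebraically closed, this local term is the intersection multiplicity $(\Gamma_\sigma\cdot\Delta_X)_{(x,x)}=\mr{length}_{\mc{O}_{X,x}}(\mc{O}_{X^{\sigma}})=-a_{X,G}(\sigma)$. Over a point $\widetilde{\infty}\in C$ lying over $\infty'$ (again with residue field $k$): $\mr{pr}'_C$-universal local acyclicity of $\mc{G}$ identifies $\mc{G}|_{X\times\{\widetilde\infty\}}$ with the nearby cycle $\Psi_{\mr{pr}'_C}\bigl((ft')^*\mc{L}\bigr)$, which together with its $\sigma$-action coincides with $\Psi_{\mr{pr}'}(\mc{L}(ft'))$, nearby cycles being insensitive to the (possibly ramified) base change of traits $C_{(\widetilde\infty)}\to S'$; hence $\tau_C$ equals the Lefschetz--Verdier local term of $\bigl(\Psi_{\mr{pr}'}(\mc{L}(ft')),\sigma\bigr)$ at $x$, whose image in $\Lambda$ under the trace formula is $\mr{Tr}\bigl(\sigma;\mr{R}\Gamma_{c}(X,\Psi_{\mr{pr}'}(\mc{L}(ft')))\bigr)$. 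Equating the two evaluations gives $\mr{Tr}\bigl(\sigma;\mr{R}\Gamma_{c}(X,\Psi_{\mr{pr}'}(\mc{L}(ft')))\bigr)=-a_{X,G}(\sigma)$.

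It then remains to pass from $\mr{R}\Gamma_{c}(X,-)$ to the stalk at $x$. Away from $x$ the complex $\Psi_{\mr{pr}'}(\mc{L}(ft'))$ is the constant sheaf $\Lambda$ with trivial $\sigma$-action: on the special fibre $X_{0}\setminus\{x\}$, $ft'$ vanishes, so $\mc{L}(ft')$ is constant there and already $\mr{pr}'$-universally locally acyclic; on the generic fibre $X_{\eta}$, $f$ is constant, so $\mc{L}(ft')$ is pulled back from $S'$ and its nearby cycle is constant as well. As $\sigma$ acts without fixed points on $X\setminus\{x\}$, the Lefschetz fixed-point formula gives $\mr{Tr}\bigl(\sigma;\mr{R}\Gamma_{c}(X\setminus\{x\},\Lambda)\bigr)=0$, so the localisation triangle $j_!\bigl(\Psi_{\mr{pr}'}(\mc{L}(ft'))|_{X\setminus\{x\}}\bigr)\to\Psi_{\mr{pr}'}(\mc{L}(ft'))\to i_{x*}\bigl(\Psi_{\mr{pr}'}(\mc{L}(ft'))_{(x,\infty')}\bigr)\to$ yields $\mr{Tr}\bigl(\sigma;\mr{R}\Gamma_{c}(X,\Psi_{\mr{pr}'}(\mc{L}(ft')))\bigr)=\mr{Tr}\bigl(\sigma;\Psi_{\mr{pr}'}(\mc{L}(ft'))_{(x,\infty')}\bigr)$, which is \eqref{nbcyag}. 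The crux, as the introduction signals, is the universal-local-acyclicity input needed to make the Lu--Zheng trace over $C$ available --- spreading $(ft')^*\mc{L}$ out, $\sigma$-equivariantly, to a $\mr{pr}'_C$-universally locally acyclic complex after a base alteration, which is exactly the content of Lemma \ref{extula}; a subsidiary point requiring care is identifying, compatibly with the $\sigma$-action, the boundary fibre $\mc{G}|_{X\times\{\widetilde\infty\}}$ with $\Psi_{\mr{pr}'}(\mc{L}(ft'))$ through the ramified base change of traits.
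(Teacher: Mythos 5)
Your overall strategy --- spread $\mathcal{L}(ft')$ out $\sigma$-equivariantly to a universally locally acyclic complex over an alteration of $\mathbb{P}^1_{t'}$ via Lemma \ref{extula}, form the Lu--Zheng relative trace, and equate its specializations at $0'$ and $\infty'$ using Theorem \ref{bc} --- is exactly the paper's, and your computation at $0'$ matches. The decisive gap is at $\infty'$. You correctly identify the value there with the Lefschetz--Verdier local term of $\bigl(\Psi_{\mathrm{pr}'}(\mathcal{L}(ft')),\sigma\bigr)$ on $X^{\sigma}=\{x\}$, but you then equate this local term with $\mathrm{Tr}\bigl(\sigma;\mathrm{R}\Gamma_{c}(X,\Psi_{\mathrm{pr}'}(\mathcal{L}(ft')))\bigr)$ ``under the trace formula''. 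The Lefschetz--Verdier trace formula requires the correspondence to be proper (or the complex to have proper support); $X$ is not proper, and on your own description the complex would be constant, hence non-properly supported, away from $x$, so the formula is not available. Moreover that description is not correct: constancy of $\mathcal{L}(ft')$ on $X_0\times S'$ says nothing about the nearby-cycle stalks, because the Milnor fibre at a point of $X\times\{\infty'\}$ sees a whole neighbourhood where $f\neq 0$ and $\mathcal{L}(ft')$ is wildly ramified; and since $\mathcal{L}(ft')$ is extended by zero across $t'=\infty'$, local acyclicity at such a point would force the nearby-cycle stalk to vanish, not to equal $\Lambda$. The statement actually needed --- and used by the paper, quoting \cite[6.5.1]{A} --- is that $\Psi_{\mathrm{pr}'}(\mathcal{L}(ft'))$ is \emph{supported on} $\{x\}$ (this is Laumon's stationary-phase principle: the smoothness of $f$ off $x$ kills the nearby cycles there). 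With that, no global trace formula is invoked: one writes $\Psi_{\mathrm{pr}'}(\mathcal{L}(ft'))\cong i_*P$ with $i\colon x\hookrightarrow X$, uses compatibility of local terms with closed immersions [SGA~5, Exp.\ III, Thm 4.4], and notes that the categorical trace over a point is the ordinary trace of $\Lambda$-modules, so the local term at $x$ is literally $\mathrm{Tr}(\sigma;P)$. As written, your passage from the local term to the stalk trace is unjustified, and the auxiliary Deligne--Lusztig step only addresses $\mathrm{R}\Gamma_c$, not the missing local-to-global comparison.

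Two further reductions are missing and are not cosmetic. First, coefficients: Lemma \ref{extula} and the Lu--Zheng formalism require torsion coefficients killed by an integer prime to $p$, whereas you apply them directly to the $\mathbb{Q}_\ell(\zeta_p)$-sheaf $\mathcal{L}$; the paper first replaces $\mathcal{L}$ by $\mathcal{L}_{\mathbb{Z}_\ell[\zeta_p]/\ell^n}$ and recovers the trace in the limit over $n$. Second, finiteness over the base: in the theorem $X$ is of finite type over the henselian trait $S=\mathbb{P}^1_{t,(0)}$, so $X\times\mathbb{P}^1_{t'}\rightarrow\mathbb{P}^1_{t'}$ is not of finite presentation, which is required both in Lemma \ref{extula} and for the objects of the Lu--Zheng correspondence category; also the family in the statement lives only over $S'=\mathbb{P}^1_{t',(\infty')}$, which does not contain $0'$, so the interpolation over all of $\mathbb{P}^1_{t'}$ only makes sense after spreading out. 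The paper handles this by a limit argument descending $X$, the $G$-action, the isolated-fixed-point condition and the smoothness off $x$ to an \'etale neighbourhood of $0$ in $\mathbb{A}^1$; you would need to include this (or an equivalent) before your construction of the family over $C$ can start.
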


\begin{proof}
 We may write $S=\invlim_{i\in I}S_i$ where $S_i$ is an \'{e}tale neighborhood of $\mb{A}^1$ around $0$, $I$ is a filtered category,
 and the transition morphisms are affine.
 By changing $I$ if needed, we may find a morphism of projective systems $\{X_i\}_{i\in I}\rightarrow\{S_i\}_{i\in I}$ such that
 $X=\invlim X_i$ and $X_j\xrightarrow{\sim}X_i\times_{S_i}S_j$ for any morphism $j\rightarrow i$ in $I$.
 By [EGA IV, 8.8.2], since $G$ is finite, we may find $i\in I$ and an action of $G$ on $X_i$ over $S_i$ which is compatible with the action on $X$ over $S$.
 By replacing $I$ by $I_{/i}$, we may assume that $G$ acts on $X_i$ for any $i\in I$.
 By further changing $I$, we may also assume that $S_i\times_{\mb{A}^1}\{0\}$ consists of a single point.
 Now, we have $X^{\sigma}=\invlim(X_i)^{\sigma}$.
 We claim that there exists $i\in I$ such that $X^{\sigma}_i$ consists of a single point $x_i$.
 Indeed, let $U_i:=S_i\times_{\mb{A}^1}(\mb{A}^1\setminus\{0\})$.
 Then $\invlim U_i\cong\eta$, where $\eta$ is the open point, in other words the generic point, of $S$.
 It suffices to find $i$ so that $X^{\sigma}_i\times_{S_i}U_i=\emptyset$.
 This follows by [EGA IV, 8.3.6].
 Similarly, we may find $i$ so that the morphism $X_i\rightarrow S_i$ is smooth outside of the closed point $x_i$.
 Since $S$ is the (strict) henselization of $S_i$ at the image of $0$, the nearby cycle remains the same.
 On the other hand, we have $a_{X,G}(\sigma)=a_{X_i,G}(\sigma)$ since
 \begin{equation*}
  (X_i)^{\sigma}
   \cong
   (X_i)^{\sigma}\times_{S_i}\{s_i\}
   \cong
   (X_i)^{\sigma}\times_{S_i}\{s\}
   \cong
   (X_i)^{\sigma}\times_{S_i}S\times_S\{s\}
   \cong
   X^{\sigma},
 \end{equation*}
 where $s_i$ (resp.\ $s$) is the closed point of $S_i$ (resp.\ $S$) over which $x_i$ (resp.\ $x$) lies.
 Thus, it suffices to show the following (slightly) more geometric claim:

 \begin{cl}
  Let $S$ be an \'{e}tale neighborhood in $\mb{A}^1=\mr{Spec}(k[t])$ of $0$, $f\colon X\rightarrow S$
  be a flat morphism between smooth schemes of finite type over $k$.
  Assume that a finite group $G$ acts on $X$ over $S$.
  Assume that there is a closed point $x\in X$ such that for each $\sigma\in G\setminus\{1\}$,
  the underlying set of the fixed scheme $X^{\sigma}$ is $\{x\}$, and $f|_{X\setminus\{x\}}$ is smooth.
  Let $\mr{pr}'\colon X\times\mb{P}^1_{t'}\rightarrow\mb{P}^1_{t'}$ be the second projection,
  $ft'\colon X\times\mb{A}^1_{t'}\xrightarrow{f\times\mr{id}} S\times\mb{A}^1_{t'}\rightarrow\mb{A}^1_t\times\mb{A}^1_{t'}\xrightarrow{\mu}\mb{A}^1$,
  and $\mc{L}(ft')$ be the zero-extension to $X\times\mb{P}^1_{t'}$ of $(ft')^*\mc{L}$.
  Then we have the equality (\ref{nbcyag}).
 \end{cl}
 Let us show the claim.
 Let $\Lambda$ be one of the 3 types of rings at the beginning of \ref{mainthmcons}.
 First, we note that $\Psi_{\mr{pr}'}(\mc{L}_{\Lambda}(ft'))_{(x,\infty')}$ is a perfect $\Lambda$-complex.
 Indeed, when $\Lambda$ is torsion, $\Psi_{\mr{pr}'}(\mc{L}_{\Lambda}(ft'))$ belongs to $D_{\mr{ctf}}(X\times\infty',\Lambda)$ by
 \cite[Rem 8.3]{O} and the perfectness follows.
 Even when $\Lambda$ is not a torsion, we can argue similarly to \cite[Rem 8.3]{O} since the cohomological dimension of $\invlim_n$ is $1$.
 Now, it suffices to show the equality (\ref{nbcyag}) for $\mc{L}(ft')$ replaced by $\mc{L}_{A}(ft')$,
 where $A:=\mb{Z}_\ell[\zeta_p]$, since the trace (of the perfect $A$-complex) remains the same.
 Since
 \begin{equation*}
  \mr{Tr}\bigl(\sigma;\Psi_{\mr{pr}'}(\mc{L}_A(ft'))_{(x,\infty')}\bigr)=\bigl\{\mr{Tr}(\dots(\mc{L}_{A/\ell^n}(ft'))\dots)\bigr\}_{n},
 \end{equation*}
 we may further replace $\mc{L}_A(ft')$ by $\mc{L}_{A/\ell^n}(ft')$.
 From now on, we fix $n>0$ and $\sigma\in G\setminus\{1\}$, and put $\Lambda:=A/\ell^n$.
 By Theorem \ref{extula}, we may take a finite surjective morphism $\widetilde{\mb{P}}'\rightarrow\mb{P}^1_{t'}$ such that $\widetilde{\mb{P}}'$ is {\em integral},
 a constructible $\Lambda$-complex $\widetilde{\mc{L}}$ on $X\times\widetilde{\mb{P}}'$,
 and a homomorphism $\alpha\colon(\sigma\times\mr{id}_{\widetilde{\mb{P}}'})^*\widetilde{\mc{L}}\rightarrow\widetilde{\mc{L}}$ such that the following hold:
 \begin{itemize}
  \item $\widetilde{\mc{L}}$ is an extension of $(\mc{L}_{\Lambda}(ft'))_U$ where $U:=\mb{A}^1_{t'}\times_{\mb{P}^1_{t'}}\widetilde{\mb{P}}'\subset\widetilde{\mb{P}}'$;

  \item $\alpha$ is an extension of the canonical isomorphism
	\begin{equation*}
	 (\sigma\times\mr{id})^*\bigl(\mc{L}_{\Lambda}(ft')\bigr)_U
	  \cong
	  \bigl(\sigma^*\mc{L}_{\Lambda}(ft')\bigr)_U
	  \cong
	  \bigl(\mc{L}_{\Lambda}(\sigma^*(f)t')\bigr)_U
	  \cong
	  \mc{L}_{\Lambda}(ft')_U
	\end{equation*}
	under the identification $\widetilde{\mc{L}}|_U\cong(\mc{L}_{\Lambda}(ft'))_U$;

  \item $\widetilde{\mc{L}}$ is $\widetilde{\mr{pr}}'$-universally locally acyclic,
	where $\widetilde{\mr{pr}}'\colon X\times_{\mb{P}^1_{t'}}\widetilde{\mb{P}}'\rightarrow\widetilde{\mb{P}}'$ is the projection.
 \end{itemize}
 We fix (geometric) points $\widetilde{0}'$ and $\widetilde{\infty}'$ of $\widetilde{\mb{P}}'$ over $0'$ and $\infty'$ of $\mb{P}^1_{t'}$,
 which exist since the morphism $\widetilde{\mb{P}}'\rightarrow\mb{P}^1_{t'}$ is taken to be surjective.
 Since $\mc{L}_{\Lambda}(ft')$ is $\mr{pr}'$-good because $\mb{P}^1_{t'}$ is $1$-dimensional and $\widetilde{\mc{L}}$ is $\widetilde{\mr{pr}}'$-locally acyclic,
 we have $\widetilde{\mc{L}}|_{X\times\widetilde{\infty}'}\cong\Psi_{\mr{pr}'}(\mc{L}_{\Lambda}(ft'))$.

 Under this setup, the relative trace $\tau_{\widetilde{\mb{P}}'}\bigl(\widetilde{\mc{L}},\alpha\bigr)$ is defined as an element of
 $\mr{H}^0(X^\sigma\times\widetilde{\mb{P}}',\mc{K}_{X^\sigma/\widetilde{\mb{P}}'})$.
 Since $\sigma\neq1$, we have $(X^{\sigma})_{\mr{red}}\cong\{x\}$ by assumption, and we have the following commutative diagram
 \begin{equation*}
  \xymatrix{
   \mr{H}^0(X^\sigma\times\widetilde{0}',\mc{K}_{X^\sigma/\widetilde{0}'})\ar[r]^-{\sim}&
   \mr{H}^0((X^\sigma)_{\mr{red}}\times\widetilde{0}',\mc{K}_{(X^\sigma)_{\mr{red}}/\widetilde{0}'})\ar@{-}[r]^-{\sim}&
   \mr{H}^0(\widetilde{0}',\Lambda)\ar@{-}[r]^-{\sim}&
   \Lambda\\
  \mr{H}^0(X^\sigma\times\widetilde{\mb{P}}',\mc{K}_{X^\sigma_{\widetilde{\mb{P}}'}/\widetilde{\mb{P}}'})
   \ar[r]^-{\sim}\ar[d]_{\mr{res}_{\widetilde{\infty}'}}\ar[u]^{\mr{res}_{\widetilde{0}'}}&
   \mr{H}^0((X^\sigma)_{\mr{red}}\times\widetilde{\mb{P}}',\mc{K}_{(X^\sigma_{\widetilde{\mb{P}}'})_{\mr{red}}/\widetilde{\mb{P}}'})\ar@{-}[r]^-{\sim}\ar[d]\ar[u]&
   \mr{H}^0(\widetilde{\mb{P}}',\Lambda)\ar@{-}[r]^-{\sim}_-{\heartsuit}\ar[d]\ar[u]&
   \Lambda\ar@{=}[d]\ar@{=}[u]\\
   \mr{H}^0(X^\sigma\times\widetilde{\infty}',\mc{K}_{X^\sigma/\widetilde{\infty}'})\ar[r]^-{\sim}&
   \mr{H}^0((X^\sigma)_{\mr{red}}\times\widetilde{\infty}',\mc{K}_{(X^\sigma)_{\mr{red}}/\widetilde{\infty}'})\ar@{-}[r]^-{\sim}&
   \mr{H}^0(\widetilde{\infty}',\Lambda)\ar@{-}[r]^-{\sim}&
   \Lambda,
   }
 \end{equation*}
 where $\mr{res}_{\star}$ is the restriction homomorphism.
 Note that the homomorphism marked with $\heartsuit$ is an isomorphism since we took $\widetilde{\mb{P}}'$ to be connected.
 We identify $\Lambda$-modules appearing in this diagram by these isomorphisms.
 Then we have an equality
 $\mr{res}_{\widetilde{0}'}\tau_{\widetilde{\mb{P}}'}\bigl(\widetilde{\mc{L}},\alpha\bigr)=
 \mr{res}_{\widetilde{\infty}'}\tau_{\widetilde{\mb{P}}'}\bigl(\widetilde{\mc{L}},\alpha\bigr)$
 in $\Lambda$.
 It remains to show the following 2 claims:
 \begin{enumerate}
  \item\label{thm-1}
       $\mr{res}_{\widetilde{0}'}\tau_{\widetilde{\mb{P}}'}\bigl(\widetilde{\mc{L}},\alpha\bigr)=-a_G(\sigma)$ in $\Lambda$;

  \item\label{thm-2}
       $\mr{res}_{\widetilde{\infty}'}\tau_{\widetilde{\mb{P}}'}\bigl(\widetilde{\mc{L}},\alpha\bigr)=\mr{Tr}\bigl(\sigma;\Psi_{\mr{pr}'}(\mc{L}_{\Lambda}(ft'))_{(x,\infty')}\bigr)$
       in $\Lambda$.
 \end{enumerate}
 Let us show \ref{thm-1}.
 Invoking Theorem \ref{bc}, we have
 \begin{equation*}
  \mr{res}_{\widetilde{0}'}\tau_{\widetilde{\mb{P}}'}\bigl(\widetilde{\mc{L}},\alpha\bigr)=
   \tau_{\widetilde{0}'}\bigl(\widetilde{\mc{L}}|_{X\times\widetilde{0}'},\alpha|_{X\times\widetilde{0}'}\bigr)=
   \tau_{\widetilde{0}'}(\Lambda,\mr{can}),
 \end{equation*}
 where $\mr{can}$ is the canonical isomorphism $\sigma^*\Lambda\cong\Lambda$.
 In view of \cite[Rem 2.15]{LZ} the categorical trace map $\tau_{\widetilde{0}'}$ coincides with that in [SGA 5, Exp.\ III, \S4]
 (or more precisely $\left<-,\mr{id}\right>$).
 By [SGA 5, Exp.\ III, Ex.\ 4.3], this coincides with the intersection product $(\Delta_X\cdot\Gamma_\sigma)_{X\times X}$,
 which is nothing but $\mr{length}(\mc{O}_{X^{\sigma}})=-a_G(\sigma)$ by \cite[Prop 7.1 (b)]{Fu}.
 Let us show \ref{thm-2}. Similarly, we have
 \begin{equation*}
  \mr{res}_{\widetilde{\infty}'}\tau_{\widetilde{\mb{P}}'}\bigl(\widetilde{\mc{L}},\alpha\bigr)=
   \tau_{\widetilde{\infty}'}\bigl(\widetilde{\mc{L}}|_{X\times\widetilde{\infty}'},\alpha|_{X\times\widetilde{\infty}'}\bigr)=
   \tau_{\widetilde{\infty}'}\bigl(\Psi_{\mr{pr}'}(\mc{L}_{\Lambda}(ft')),\sigma'\bigr),
 \end{equation*}
 where $\sigma'$ is the composition of canonical isomorphisms
 $\sigma^*\Psi_{\mr{pr}'}(\mc{L}_{\Lambda}(ft'))\cong\Psi_{\mr{pr}'}(\sigma^*\mc{L}_{\Lambda}(ft'))\cong\Psi_{\mr{pr}'}(\mc{L}_{\Lambda}(ft'))$.
 By Proposition \ref{complem}.\ref{complem-1} below (which is a reproduction of \cite[6.5.1]{A} and circular reasoning do not occur),
 $\Psi_{\mr{pr}'}(\mc{L}_{\Lambda}(ft'))$ is supported on $(x,\infty')$.
 Let $i\colon x\hookrightarrow X$ be the closed immersion, and put $P:=\Psi_{\mr{pr}'}(\mc{L}_{\Lambda}(ft'))_{(x,\infty')}$.
 Then we have $\Psi_{\mr{pr}'}(\mc{L}_{\Lambda}(ft'))\cong i_*P$.
 By [SGA 5, Exp.\ III, Thm 4.4], we have
 \begin{equation*}
  \tau_{\widetilde{\infty}'}\bigl(\Psi_{\mr{pr}'}(\mc{L}_{\Lambda}(ft')),\sigma'\bigr)
   =
   \tau_{\widetilde{\infty}'}(i_*P,i_*\sigma'_x)
   =
   \iota\tau_{\widetilde{\infty}'}(P,\sigma'_x),
 \end{equation*}
 where $\iota\colon\mr{H}^0(x,\mc{K}_{x/\widetilde{\infty}'})\xrightarrow{\sim}\mr{H}^0(X^{\sigma},\mc{K}_{X^{\sigma}/\widetilde{\infty}'})$.
 The categorical trace $\tau$ for $x$ is nothing but the ordinary trace for $\Lambda$-modules, and the claim follows.
\end{proof}

\section{$\mb{Q}_\ell$-rationality of Artin representation}
\label{sec2}
Arguing similarly to the proof of \cite[5.3]{KSS} with $\reptd(\mr{R}\phi_x\Lambda)$ in \cite[Conj 5.1]{KSS} replaced by $\Psi_{\mr{pr}'}(\mc{L}(ft'))_{(x,\infty')}$,
Conjecture \ref{conj-1} immediately follows thanks to Theorem \ref{thm}.
However, the best we can show with this argument is that the Artin representation is $\mb{Q}_\ell(\zeta_p)$-rational.
In order to achieve the $\mb{Q}_\ell$-rationality, namely to prove Conjecture \ref{conj-2},
we wish to compare $\reptd(\mr{R}\phi_x\Lambda)$ and $\Psi_{\mr{pr}'}(\mc{L}(ft'))_{(x,\infty')}$ as virtual $G$-representations,
which can be viewed as refinements of some results of \cite[\S6]{A}.

\subsection{}
For a triangulated category $\mc{T}$, we denote by $\mr{K}\mc{T}$ the Grothendieck group of $\mc{T}$.
For a ring $A$, let $D_{\mr{perf}}(A)$ the derived category of perfect $A$-complexes.
We denote by $\mr{K}^{\cdot}(A)$ the Grothendieck group $\mr{K}D_{\mr{perf}}(A)$, borrowing the notation from SGA 6.
For a finite group $G$ and a field $L$ over $\mb{Q}_\ell$, we denote $\mr{K}^{\cdot}(L[G])$ also by $\mr{R}_L(G)$ following \cite{Slin}.

\subsection{}
Let $k$ be an algebraically closed field of characteristic $p>0$, and let $K:=\mr{Frac}(k\{t\})$,
the function field of the (strict) henselization of $\mr{Spec}(k[t])$ at $t=0$.
Put $\Gamma:=\mr{Gal}(K^{\mr{sep}}/K)$, where $K^{\mr{sep}}$ is a separable closure of $K$.
For a finite dimensional $\mb{Q}_\ell$-vector space $M$ equipped with a continuous $\Gamma$-action,
we wish to construct the ``total dimension representation'' $\reptd(M)$.
The construction in \cite[(1.2.1)]{KSS} is slightly ambiguous\footnote{
The ``functor'' $\reptd$ is defined in \cite[\S1]{KSS}.
Let $L$ be a finite Galois extension of $K$, and let $M$ be a $(\mb{Z}/\ell^n)[\mr{Gal}(L/K)]$-module for some $n$.
In \cite{KSS}, they define $\repsw^{(L)}(M)$ to be $\mr{Sw}_{\mr{Gal}(L/K)}\otimes_{\mb{Z}_{\ell}[\mr{Gal}(L/K)]}M$ as we do here.
Then they claim that this does not depend on the choice of $L$.
It is true that $\repsw^{(L)}(M)$ is independent on the choice of $L$ in the sense that if we take another Galois extension $L'\supset L$ of $K$,
there is {\em some} isomorphism $\repsw^{(L')}(M)\cong\repsw^{(L)}(M)$ compatible with the map $\mr{Gal}(L'/K)\rightarrow\mr{Gal}(L/K)$.
However, this isomorphism is {\em not} canonical since $\mr{Sw}_{\mr{Gal}(L/K)}$ is merely a representation constructed from the Swan character.
Thus, for a finite $\mb{Z}_{\ell}$-module $M$ equipped with continuous $\Gamma$-action, it is not clear if $\{\repsw(M/\ell^n)\}$ forms an inverse system.
},
so let us make it more precise.
However, contrary to \cite{KSS}, we do not construct $\reptd$ as a functor, but we content ourselves
just by constructing $\reptd(M)$ when we are given $M$.

Let $L$ be a finite Galois extension of $K$.
Recall that the Swan representation $\mr{Sw}_{\mr{Gal}(L/K)}$ is defined.
This is a projective $\mb{Z}_\ell[\mr{Gal}(L/K)]$-module by \cite[19.2, Thm 44]{Slin}.
Now, let $K=K_0\subset K_1\subset K_2\subset\dots$ be a tower of finite extensions of $K$ such that the extension $K_n/K$ is Galois.
We put $\Delta_i:=\mr{Gal}(K_i/K)$.
For each $i\in\mb{N}$, we may {\em choose} an isomorphism
\begin{equation*}
 \varphi_i\colon
 \mr{Sw}_{\Delta_{i+1}}\otimes_{\mb{Z}_\ell[\Delta_{i+1}]}\mb{Z}_\ell[\Delta_i]
  \xrightarrow{\sim}
  \mr{Sw}_{\Delta_i}.
\end{equation*}

Let $\Lambda$ be a finite $\mb{Z}_\ell$-algebra.
We say that a finitely generated $\Lambda$-module $M$ equipped with continuous $\Gamma$-action is {\em trivialized by the tower $\{K_i\}$} if,
for any $n$, the $\Gamma$-action on $M\otimes^{\mb{L}}_{\mb{Z}_\ell}\mb{Z}/\ell^n$ factors through $\Gamma\twoheadrightarrow\Delta_i$ for some $i$
(possibly depending on $n$).
If we are given a finitely generated $\Lambda$-module $M$ equipped with continuous $\Gamma$-action, we may choose a tower $\{K_i\}$ by which $M$ is trivialized.

Let $M$ be a continuous $\Lambda[\Gamma]$-module trivialized by $\{K_i\}$.
If $\Lambda$ is a {\em torsion ring}, we put
\begin{equation*}
 \repsw_{i}(M):=\mr{Sw}_{\Delta_i}\otimes_{\mb{Z}_\ell[\Delta_i]}M
  \cong
  \mr{Sw}_{\Delta_i}\otimes^{\mb{L}}_{\mb{Z}_\ell[\Delta_i]}M
  \,
  \bigl(\cong\mr{Hom}_{\mb{Z}_\ell[\Delta_i]}(\mr{Sw}_{\Delta_i},M)\bigr)
\end{equation*}
for any $i>N$ for sufficiently large $N$.
The isomorphism $\varphi_i$ that we fixed induces an isomorphism $\repsw_{i+1}(M)\xrightarrow{\sim}\repsw_i(M)$.
With this isomorphism, we identify $\repsw_{i}(M)$ for any $i>N$, and this $\Lambda[\Gamma]$-module is denoted by $\repsw_{\Lambda}(M)$.
We also put $\reptd_{\Lambda}(M):=\repsw_{\Lambda}(M)\oplus M$.
Note that the construction of $\repsw_{\Lambda}(M)$ is functorial, and furthermore exact, with respect to $\Lambda[\Gamma]$-module $M$ trivialized by $\{K_i\}$.
In particular, if we are given a finite group $G$ and a $\Lambda[\Gamma\times G]$-module $M$ such that
the underlying $\Lambda[\Gamma]$-module is trivialized by $\{K_i\}$, then $\repsw_{\Lambda}(M)$ and $\reptd_{\Lambda}(M)$ are $\Lambda[\Gamma\times G]$-modules.

Now, let $\Lambda$ be a finite {\em projective} $\mb{Z}_\ell$-algebra, and $G$ be a finite group.
Let $M$ be a finitely generated $\Lambda$-module equipped with continuous $\Gamma\times G$-action whose underlying $\Gamma$-action is trivialized by $\{K_i\}$.
Then we define
\begin{equation*}
 \repsw_{\Lambda}(M):=\invlim_n\repsw_{\Lambda/\ell^n}\bigl(M\otimes\mb{Z}/\ell^n\bigr).
\end{equation*}
If we wish to be more precise, we also denote it by $\repsw_{\Lambda}^{\{K_i\}}(M)$.
Similarly, $\repsw$ is a functor from the category of finitely generated $\Lambda$-modules equipped with continuous $\Gamma\times G$-action which
is trivialized by $\{K_i\}$ to the category of continuous $\Lambda[\Gamma\times G]$-modules.
One can also check easily that this functor is exact.
We also put $\reptd_{\Lambda}(M):=\repsw_{\Lambda}(M)\oplus M$.

\subsection{}
\label{tddfn}
Let $L$ be a field of finite extension of $\mb{Q}_\ell$.
Let $\mc{O}_L$ the ring of integers of $L$.
Finally, let $M$ be a finitely generated $L$-vector space equipped with continuous $\Gamma\times G$-action.
In this situation, let us define $\repsw_L(M)$ in $\mr{R}_L(G)$.
By continuity of the action, we may take an $\mc{O}_L$-lattice $N\subset M$ which is stable under $\Gamma\times G$-action.
We take a tower $\{K_i\}$ by which trivializes $N$.
Then we have:

\begin{lem*}
 The $G$-representation $\repsw^{\{K_i\}}_{\mc{O}_L}(N)\otimes\mb{Q}$ considered as an element in $\mr{R}_L(G)$
 does not depend on the choice of $N$ and $\{K_i\}$.
\end{lem*}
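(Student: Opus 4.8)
The claim is an independence statement with two parameters: the lattice $N\subset M$ and the trivializing tower $\{K_i\}$. I would prove independence in each parameter separately, in the order: first fix the tower and vary the lattice, then fix a lattice and vary the tower.

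\textbf{Independence of the lattice.} Suppose $N, N'\subset M$ are two $\Gamma\times G$-stable $\mc{O}_L$-lattices, both trivialized by the same tower $\{K_i\}$ (we can always shrink to a common tower at this stage, or defer that point). Since $M$ is finitely generated, there exists an integer $r$ with $\ell^r N\subset N'\subset N$, so it suffices to treat the case $\ell N\subset N'\subset N$, and then the general case follows by a finite induction. Now $Q:=N/N'$ is a finite $\mc{O}_L$-module with continuous $\Gamma\times G$-action, killed by $\ell$, hence a module over the torsion ring $\mc{O}_L/\ell$, and it is trivialized by $\{K_i\}$ as a quotient of $N$. The short exact sequence $0\to N'\to N\to Q\to 0$ of $\mc{O}_L[\Gamma\times G]$-modules, upon applying $-\otimes^{\mathbb L}_{\mb{Z}_\ell}\mb{Z}/\ell^n$ and then the exact functor $\repsw_{\mc{O}_L/\ell^n}$ (exactness is asserted in \ref{tddfn}) and passing to the inverse limit over $n$, yields — after tensoring with $\mb{Q}$ and killing the torsion contribution of $Q$ — the equality $[\repsw^{\{K_i\}}_{\mc{O}_L}(N)\otimes\mb{Q}]=[\repsw^{\{K_i\}}_{\mc{O}_L}(N')\otimes\mb{Q}]$ in $\mr{R}_L(G)$. (The point is that $\repsw_{\mc{O}_L/\ell}(Q)$ is a finite group, hence zero in $\mr{R}_L(G)\otimes\mb{Q}$.) Concretely, the relevant identity is
\begin{equation*}
 [\repsw^{\{K_i\}}_{\mc{O}_L}(N)\otimes\mb{Q}]-[\repsw^{\{K_i\}}_{\mc{O}_L}(N')\otimes\mb{Q}]=[\repsw_{\mc{O}_L/\ell}(Q)\otimes\mb{Q}]=0.
\end{equation*}

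\textbf{Independence of the tower.} Now fix a lattice $N$ and let $\{K_i\}$, $\{K'_i\}$ be two towers trivializing $N$. As in the definition of trivialization, for each $n$ the action on $N\otimes\mb{Z}/\ell^n$ factors through a finite quotient of $\Gamma$, and any two towers admit a common refinement (a third tower $\{K''_i\}$ each of whose terms contains both $K_j$ and $K'_{j'}$ for suitable indices), so it suffices to compare $\{K_i\}$ with a refinement. Here the key input is the compatibility system $\varphi_i$ of the Swan modules together with the projection formula $\mr{Sw}_{\Delta'}\otimes_{\mb{Z}_\ell[\Delta']}\mb{Z}_\ell[\Delta]\cong\mr{Sw}_\Delta$ for $\Delta'\twoheadrightarrow\Delta$, which is exactly what makes $\repsw_i(M)$ canonically identified across different $i$ within a single tower; the same formula applied to the refinement shows $\mr{Sw}_{\Delta''_i}\otimes_{\mb{Z}_\ell[\Delta''_i]}N\otimes\mb{Z}/\ell^n$ computes the same $\Lambda/\ell^n[\Gamma\times G]$-module as $\mr{Sw}_{\Delta_i}\otimes_{\mb{Z}_\ell[\Delta_i]}N\otimes\mb{Z}/\ell^n$ for $i$ large. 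Passing to the inverse limit over $n$ and then to $\mb{Q}$ coefficients gives the desired equality in $\mr{R}_L(G)$.

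\textbf{Main obstacle.} The delicate point is the second part — checking that the canonical identifications $\varphi_i$ really are compatible with passing to a refinement of the tower, i.e.\ that the inverse limit $\invlim_n \repsw_{\mc{O}_L/\ell^n}(N\otimes\mb{Z}/\ell^n)$ does not secretly depend on the coherent choice of $\varphi_i$'s. In fact the statement is only about the class in $\mr{R}_L(G)$, not the module itself, which is what rescues us: different systems of $\varphi_i$ differ by automorphisms of the projective $\mb{Z}_\ell[\Delta_i]$-module $\mr{Sw}_{\Delta_i}$, and such an automorphism induces an isomorphism of the resulting $G$-representations, hence the same class. I expect the bookkeeping of indices when comparing two towers through a common refinement, and verifying that all the relevant modules are finitely generated and flat enough for the $\otimes^{\mathbb L}$'s to be honest tensor products in large degree, to be the most technical (if routine) part; the conceptual content is entirely in the exactness of $\repsw$ and the projectivity of the Swan module.
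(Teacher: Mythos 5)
Your overall strategy coincides with the paper's: lattice independence via a short exact sequence whose quotient is torsion, exactness of $\repsw$, and the vanishing of torsion contributions after $\otimes\mb{Q}$; tower independence via the fact that $\mr{Sw}_{\Delta}\otimes_{\mb{Z}_\ell[\Delta]}(N\otimes\mb{Z}/\ell^n)$ is (non-canonically) independent of the large enough quotient $\Delta$, which is exactly the paper's key observation. One small slip in the lattice step: for two arbitrary lattices one cannot arrange $\ell^r N\subset N'\subset N$; as in the paper, first replace $N'$ by $N+N'$ (or $N\cap N'$), after which the containment, the choice of a common trivializing tower, and the torsion-quotient argument go through.

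The one substantive soft spot is the sentence ``passing to the inverse limit over $n$ and then to $\mb{Q}$ coefficients gives the desired equality.'' This is precisely where the non-canonicity bites: the level-$n$ isomorphisms between the constructions attached to the two towers need not be compatible with the transition maps as $n$ varies, so they do not assemble into an isomorphism of the inverse limits; and your ``Main obstacle'' paragraph only treats the dependence on the $\varphi_i$ within a single tower, not this cross-tower incompatibility. The paper's remedy, which you should import to close the step, is to compare characters rather than modules: a class in $\mr{R}_L(G)$ is determined by its traces, and $\mr{Tr}\bigl(\sigma;\repsw_{\mc{O}_L}(N)\bigr)$ is the compatible system $\bigl\{\mr{Tr}\bigl(\sigma;\repsw_{\mc{O}_L/\ell^n}(N\otimes\mb{Z}/\ell^n)\bigr)\bigr\}_n$ in $\mc{O}_L\cong\invlim\mc{O}_L/\ell^n$, so the arbitrary, non-compatible level-$n$ isomorphisms already force equality of traces for every $n$ and every $\sigma$, hence equality of the two classes in $\mr{R}_L(G)$ after tensoring with $\mb{Q}$. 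With that substitution your argument matches the paper's proof.
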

\begin{proof}
 Assume we are given another tower $\{K'_i\}$ which trivializes $N$.
 We wish to show that $\repsw^{\{K_i\}}_{\mc{O}_L}(N)\otimes\mb{Q}=\repsw^{\{K'_i\}}_{\mc{O}_L}(N)\otimes\mb{Q}$ in $\mr{R}_L(G)$.
 To show this, it suffices to show that $\mr{Tr}(\sigma;-)$ are the same for any $\sigma\in G$.
 Since $\mr{Tr}\bigl(\sigma;\repsw_{\mc{O}_L}(N)\bigr)=\bigl\{\mr{Tr}(\sigma;\repsw_{\mc{O}_L/\ell^n}(N\otimes\mb{Z}/\ell^n))\bigr\}_n$,
 it suffices to show that
 $\mr{Tr}(\sigma;\repsw^{\{K_i\}}_{\mc{O}_L/\ell^n}(N\otimes\mb{Z}/\ell^n))=\mr{Tr}(\sigma;\repsw^{\{K'_i\}}_{\mc{O}_L/\ell^n}(N\otimes\mb{Z}/\ell^n))$.
 This is true since $\mr{Sw}_{\Delta}\otimes_{\mb{Z}_\ell[\Delta]}N/\ell^n$ is (non-canonically!) isomorphic for any large enough $\Delta$.
 Let us show that it does not depend on the choice of $N$.
 If we are given another lattice $N'$, $N+N'\subset M$ is another lattice as well.
 By replacing $N'$ by $N+N'$, we may assume that $N'\supset N$.
 We fix a tower $\{K_i\}$ which trivializes $N'$.
 Then this also trivializes $N$.
 Consider the exact sequence $0\rightarrow N\rightarrow N'\rightarrow N'/N\rightarrow 0$.
 Since the functor $\repsw^{\{K_i\}}_{\mc{O}_L}$ is exact, it suffices to show that $\repsw^{\{K_i\}}_{\mc{O}_L}(N'/N)\otimes\mb{Q}=0$ in $\mr{R}_L(G)$.
 This holds since $N'/N$ is a torsion module.
\end{proof}

\begin{dfn*}
 Let $M$ be a finitely generated $L$-vector space equipped with continuous $\Gamma\times G$-action.
 \begin{enumerate}
  \item The representation $\repsw^{\{K_i\}}_{\mc{O}_L}(N)\otimes\mb{Q}$ considered as an element of $\mr{R}_L(G)$ is denoted by $\repsw(M)$.

  \item We put $\reptd(M):=\repsw(M)+M$ in $\mr{R}_L(G)$.
 \end{enumerate}
\end{dfn*}

\subsection{}
\label{trdfn}
Let $G$ be a finite group and $\Lambda$ be a commutative noetherian ring.
We define $D_{\Lambda\mr{-perf}}(\Lambda[G])$ to be the full subcategory of $D(\Lambda[G])$ spanned by complexes $\mc{F}$ which are $\Lambda$-perfect.
For $\sigma\in G$, the multiplication by $\sigma$ from the left induces the homomorphism
$\phi_\sigma\colon\Lambda[G]\rightarrow\Lambda[G]$ of {\em right} $\Lambda[G]$-modules.
For $\mc{F}\in D_{\Lambda\mr{-perf}}(\Lambda[G])$ and $\sigma\in G$,
we have the morphism
\begin{equation*}
 \varphi_\sigma\colon
  \mc{F}\cong\Lambda[G]\otimes^{\mb{L}}_{\Lambda[G]}\mc{F}
  \xrightarrow{\phi_\sigma\otimes\mr{id}}
  \Lambda[G]\otimes^{\mb{L}}_{\Lambda[G]}\mc{F}
  \cong
  \mc{F}
\end{equation*}
in $D_{\mr{perf}}(\Lambda)$.
We put $\mr{Tr}(\sigma;\mc{F}):=\mr{Tr}(\varphi_\sigma;\mc{F})$ (cf.\ [SGA 6, Exp.\ I, 8.1]).
Fixing $\sigma\in G$, the map $\mr{Tr}(\sigma;-)$ is additive\footnote{
It is well-known to the experts that the trace for the derived category of perfect complexes is non-additive in general.
This was first observed by D. Ferrand, and the interesting history of the discovery of this defect can be found in \cite{F}.
}.
The verification is elementary as follows:
Assume we are given an exact triangle $M'\xrightarrow{\alpha}M\rightarrow M''\rightarrow$ in $D_{\Lambda\mr{-perf}}(\Lambda[G])$.
We may find the following commutative diagram
\begin{equation*}
 \xymatrix{
  M'\ar[r]^-{\alpha}\ar[d]_{\mr{qis}}&M\ar[d]^{\mr{qis}}\\
 N'^{\bullet}\ar[r]^-{\beta}&N^{\bullet}}
\end{equation*}
such that $N^{(\prime)i}$ is finitely generated (left) $\Lambda[G]$-module which is projective over $\Lambda$ for any $i$,
$N^{(\prime)i}=0$ for all but finitely many $i$, and $\beta$ is an actual morphism of complexes.
Since the trace is additive for the {\em category of complexes} $C^{\mr{b}}(\Lambda[G])$, we have
\begin{equation*}
 \mr{Tr}\bigl(\sigma;N'^{\bullet}\bigr)+\mr{Tr}\bigl(\sigma;\mr{cone}(\beta)\bigr)=\mr{Tr}\bigl(\sigma;N^{\bullet}\bigr),
\end{equation*}
and the additivity follows.
Thus, we have the homomorphism $\mr{Tr}\colon\mr{K}(D_{\Lambda\mr{-perf}}(\Lambda[G]))\rightarrow\Lambda$.

\begin{prop}
 \label{complem}
 Consider the situation of Theorem {\normalfont\ref{mainthmcons}}.
 \begin{enumerate}
  \item\label{complem-1}
       The complex $\Psi_{\mr{pr}'}(\mc{L}(ft'))$ on $X\times\infty'$ is supported on $(x,\infty')$.

  \item\label{complem-2}
       We have $\reptd((\mr{R}\phi_f\Lambda)_x)=-\Psi_{\mr{pr}'}(\mc{L}(ft'))_{(x,\infty')}$ in $\mr{R}_{\mb{Q}_\ell(\zeta_p)}(G)$.
 \end{enumerate}
\end{prop}
\begin{proof}
 We have already proved \ref{complem-1} in \cite[6.5.1]{A}, but we include the proof for the sake of completeness.
 Put $S:=\mr{Spec}(k\{t\})$, $S':=\mr{Spec}(k\{1/t'\})$, and let $q'\colon S\times S'\rightarrow S'$ be the projection.
 Moreover, let $\eta$ be the generic point of $S$ and $0$ (resp.\ $\infty'$) be the closed point of $S$ (resp.\ $S'$).
 Let $M$ be a finite dimensional $\Lambda:=\mb{Q}_\ell(\zeta_p)$-vector space on which $\pi_1(\eta)\times G$ acts continuously.
 Then we claim that
 \begin{equation}
  \label{repeq}\tag{$\star$}
  \reptd_{\Lambda}(M)=
  -\left[\Psi_{q'}\bigl(\mc{L}_{\Lambda}(tt')\otimes_{\Lambda}\mc{M}_!\bigr)_{(0,\infty')}\right]
 \end{equation}
 in $\mr{R}_{\Lambda}(G)\cong\mr{K}^{\cdot}(\Lambda[G])$,
 where $\mc{M}_!$ is the zero extension of $\mc{M}$, the sheaf of (left) $\Lambda[G]$-modules on $\eta$ associated to $M$,
 by the open immersion $\eta\hookrightarrow S$.
 It suffices to show that for any $\sigma\in G$, we have
 \begin{equation}
  \label{treq}\tag{$\star\star$}
  \mr{Tr}(\sigma;\reptd_{\Lambda}(M))=
  -\mr{Tr}\bigl(\sigma;\Psi_{q'}(\mc{L}_\Lambda(tt')\otimes\mc{M}_!)_{(0,\infty')}\bigr)
 \end{equation}
 in $\Lambda$.
 For this, it suffices to show (\ref{treq}) when $\Lambda$ is a local regular finite $\mb{Z}_\ell$-algebra
 and $M$ is a finitely generated free $\Lambda$-module over which $\pi_1(\eta)\times G$ acts continuously.
 For $N\in D_{\Lambda\mr{-perf}}(\Lambda[G])$, for which the notion of trace makes sense by \ref{trdfn},
 we have $\mr{Tr}(\sigma;N)=\{\mr{Tr}(\sigma;N\otimes^{\mb{L}}_\Lambda\Lambda/\ell^n\Lambda)\}_{n\geq0}$
 via the identification $\Lambda\cong\invlim\Lambda/\ell^n\Lambda$.
 Thus, it suffices to show (\ref{treq}) when $\Lambda$ is a finite local ring with residual character $\ell$, and for this $\Lambda$,
 it suffices to show the identity (\ref{repeq}) in $\mr{K}(D_{\Lambda\mr{-perf}}(\Lambda[G]))$.

 Before we proceed, we make an observation.
 Let $\Delta$ be a finite group, $N'\rightarrow N\rightarrow N''\rightarrow$ be an exact triangle in $D^-(\Lambda[\Delta]^{\mr{op}})$,
 and let $M\in D^-(\Lambda[\Delta]\otimes_{\Lambda}\Lambda[G])$.
 Then we have an exact triangle
 \begin{equation*}
  N'\otimes^{\mb{L}}_{\Lambda[\Delta]}M\rightarrow N\otimes^{\mb{L}}_{\Lambda[\Delta]}M\rightarrow N''\otimes^{\mb{L}}_{\Lambda[\Delta]}M
   \rightarrow
 \end{equation*}
 in $D(\Lambda[G])$.
 In particular, if we are given $N,N'\in D_{\mr{perf}}(\Lambda[\Delta]^{\mr{op}})$ and $M\in D_{\Lambda\mr{-perf}}(\Lambda[\Delta\times G])$
 such that $[N]=[N']$ in $\mr{K}^{\cdot}(\Lambda[\Delta]^{\mr{op}})$,
 we have $[N\otimes^{\mb{L}}_{\Lambda[\Delta]}M]=[N'\otimes^{\mb{L}}_{\Lambda[\Delta]}M]$ in $\mr{K}(D_{\Lambda\mr{-perf}}(\Lambda[G]))$.

 Let $f\colon \xi\rightarrow\eta$ be a finite Galois extension with Galois group $\Delta$.
 We must show (\ref{repeq}) for a finite (commutative) ring $\Lambda$ in which $p$ is invertible
 and $M$ a finitely generated $\Lambda[\Delta\times G]$-module which is free as the underlying $\Lambda$-module.
 We view $\mc{D}:=f_*\Lambda_{\xi}$ as a sheaf of {\em right} $\Lambda[\Delta]$-modules on $\eta$.
 Then, the sheaf $\mc{L}_{\Lambda}(tt')\otimes^{\mb{L}}_{\Lambda}\mc{D}_!$ belongs to $D_{\mr{ctf}}(S\times S',\Lambda[\Delta]^{\mr{op}})$.
 Using [SGA 4, Exp.\ XVII, 5.2.11], $\Psi_{q'}\bigl(\mc{L}_{\Lambda}(tt')\otimes^{\mb{L}}_{\Lambda}\mc{D}_!\bigr)$
 belongs to $D_{\mr{ctf}}(S_{\infty'},\Lambda[\Delta]^{\mr{op}})$,
 which implies that $\Psi_{q'}\bigl(\mc{L}(tt')\otimes^{\mb{L}}_{\Lambda}\mc{D}_!\bigr)_{(0,\infty')}$ is a perfect $\Lambda[\Delta]^{\mr{op}}$-complex.
 Invoking \cite[2.6.1]{L} and \cite[2.4.2.2]{L}, we have an equality
 $\mr{Dt}_{\Delta}\otimes_{\mb{Z}_\ell}\Lambda=\Psi_{q'}\bigl(\mc{L}_{\Lambda}(tt')\otimes^{\mb{L}}_{\Lambda}\mc{D}_!\bigr)_{(0,\infty')}[-1]$
 in $\mr{K}^{\cdot}(\Lambda[\Delta]^{\mr{op}})$, where $\mr{Dt}_{\Delta}:=\mr{Sw}_\Delta\oplus\mb{Z}_\ell[\Delta]$.
 Now, for $M\in D_{\Lambda\mr{-perf}}(\Lambda[\Delta\times G])$, let $\mc{M}$ be the associated complex in $D^{\mr{b}}(\eta,\Lambda[G])$,
 and denote by $\mc{M}_!$ the zero extension to $S$.
 For a scheme $X$, finite groups $A$, $B$, and complexes $\mc{F}\in D^-(X,\Lambda[A]^{\mr{op}})$ and $M\in D^-(\Lambda[A\times B])$,
 we simply denote by $\mc{F}\otimes^{\mb{L}}_{\Lambda[A]}M$ in $D^-(X,\Lambda[B])$ the tensor product $\mc{F}\otimes^{\mb{L}}_{\Lambda[A]}\pi^*M$
 where $\pi\colon X_{\mr{\acute{e}t}}\rightarrow\Set$ is the unique morphism of topoi.
 Using this notation, we have $\mc{D}_!\otimes^{\mb{L}}_{\Lambda[\Delta]}M\cong\mc{M}_!$.
 We have the isomorphisms in $D_{\Lambda\mr{-perf}}(\Lambda[G])$:
 \begin{align*}
  \reptd(M)
   &\cong
  \mr{Dt}_{\Delta}\otimes^{\mb{L}}_{\mb{Z}_\ell[\Delta]}M,\\
  \Psi_{q'}\bigl(\mc{L}_{\Lambda}(tt')\otimes^{\mb{L}}_{\Lambda}\mc{M}_!\bigr)
  &\cong
  \Psi_{q'}\bigl(\mc{L}_{\Lambda}(tt')\otimes^{\mb{L}}_{\Lambda}\mc{D}_!\bigr)
  \otimes^{\mb{L}}_{\Lambda[\Delta]}M.
 \end{align*}
 Indeed, the first isomorphism follows by definition.
 Let us check the second one.
 Put $\mc{H}^0\Psi_{q'}=:\Psi_{q'}^0$.
 Note that the cohomological dimension of $\Psi_{q'}$ is bounded by \cite[Prop 3.1]{O}
 and $\Psi_{q'}\cong\mr{R}\Psi_{q'}^0$ by definition.
 This implies that we may take a quasi-isomorphism $\mc{L}_{\Lambda}(tt')\otimes_{\Lambda}\mc{D}_!\simeq\mc{I}^{\bullet}$
 of right $\Lambda[\Delta]$-complexes on $\eta$ such that
 $\mc{I}^n=0$ for almost all $n\in\mb{Z}$ and $\mc{I}^n$ is $\Psi^0_{q'}$-acyclic for any $n$.
 Now, let $P_{\bullet}\rightarrow M$ be a resolution (not necessarily bounded below) by finite free $\Lambda[\Delta\times G]$-modules.
 We have quasi-isomorphisms
 \begin{align*}
  \Psi_{q'}\bigl(\mc{L}_{\Lambda}(tt')\otimes^{\mb{L}}_{\Lambda}\mc{M}_!\bigr)
  &\simeq
  \Psi_{q'}\bigl(\mc{L}_{\Lambda}(tt')\otimes^{\mb{L}}_{\Lambda}\mc{D}_!\otimes^{\mb{L}}_{\Lambda[\Delta]}M\bigr)
  \simeq
  \Psi^0_{q'}(\mc{I}^{\bullet}\otimes_{\Lambda[\Delta]}P_{\bullet})\\
  &\simeq
  \Psi^0_{q'}(\mc{I}^{\bullet})\otimes_{\Lambda[\Delta]}P_{\bullet}
  \simeq
  \Psi_{q'}\bigl(\mc{L}_{\Lambda}(tt')\otimes^{\mb{L}}_{\Lambda}\mc{D}_!\bigr)
  \otimes^{\mb{L}}_{\Lambda[\Delta]}M,
 \end{align*}
 where the 2nd quasi-isomorphism follows by \cite[14.3.4]{KSc}.
 Thus, the claim follows.

 By the observation above, we have the equality $\reptd(M)=-\left[\Psi_{q'}\bigl(\mc{L}_{\Lambda}(tt')_!\otimes^{\mb{L}}_{\Lambda}M\bigr)_{(0,\infty')}\right]$
 in $\mr{K}(D_{\Lambda\mr{-perf}}(\Lambda[G]))$.
 Thus, by \ref{trdfn}, (\ref{repeq}) holds when $\Lambda$ is a finite ring, and (\ref{repeq}) holds when $\Lambda=\mb{Q}_\ell(\zeta_p)$ as well.
 Now, let $\mc{F}$ be a constructible $\Lambda[G]$-module on $S$.
 By considering the localization sequence, (\ref{repeq}) implies that
 \begin{equation*}
  \tag{$\heartsuit$}
  \reptd(\mr{R}\phi_{\mr{id}}(\mc{F}))=
   -\left[\Psi_{q'}\bigl(\mc{L}(tt')\otimes_{\Lambda}q^*\mc{F}\bigr)_{(0,\infty')}\right],
 \end{equation*}
 where $q\colon S\times S'\rightarrow S$ is the projection.
 For the rest of the argument, we only need to copy the proof of \cite[6.5]{A}.
 However, we reproduce the proof for the convenience of the reader.

 Let $y$ be a geometric point of $X$ over $0\in S$, and consider the following commutative diagram of strictly henselian schemes:
 \begin{equation*}
  \xymatrix@C=70pt{
   (X\times S')_{(y,\infty')}\ar[r]^-{g:=(f\times\mr{id})_{(y,\infty')}}\ar[d]&
   (S\times S')_{(0,\infty')}\ar[r]^-{\pi}\ar[d]^{q_{(0,\infty')}}&
   S'\\
  X_{(y)}\ar[r]^-{f_{(y)}}&S.&
   }
 \end{equation*}
 We have
 \begin{align*}
  \tag{$\heartsuit$}
  \Psi_{\mr{pr}'}(\mc{L}(ft'))_{(y,\infty')}
   &\cong
   \mr{R}(\pi\circ g)_*(\mc{L}(ft'))_{\overline{\eta}'}
   \cong
   \mr{R}(\pi\circ g)_*g^*(\mc{L}(tt'))_{\overline{\eta}'}
   \cong
   \mr{R}\pi_*\bigl(\mc{L}(tt')\otimes\mr{R}g_*\Lambda\bigr)_{\overline{\eta}'}\\
  &\cong
  \mr{R}\pi_*\bigl(\mc{L}(tt')\otimes q_{(0,\infty')}^*\mr{R}f_{(y)*}\Lambda\bigr)_{\overline{\eta}'}
  \cong
  \Psi_{q'}\bigl(\mc{L}(tt')\otimes_{\Lambda}q^*\mr{R}f_{(y)*}\Lambda\bigr)_{(0,\infty')},
 \end{align*}
 where the 1st and the last isomorphism follows by definition,
 the 3rd by ``projection formula'' (cf.\ \cite[Lem 6.4]{A}) because $\mc{L}(tt')$ is smooth on the fiber of $\eta'$,
 the 4th by the compatibility of the nearby cycle and the base change (in other words the $f_{(y)}$-goodness of any $\Lambda$-module on $X_{(y)}$).
 If $y$ does not lie on $x$, then since $f$ is smooth outside of $x$, $f$ is locally acyclic at $y$.
 In particular, $\mr{R}f_{(y)*}\Lambda\cong\Lambda$.
 Thus, by the above isomorphism together with Katz-Laumon vanishing, we get \ref{complem-1}.
 Finally, we note that for any $\Lambda$-module $\mc{F}$ on $X$, we have
 \begin{equation*}
  \tag{$\heartsuit$}
  \mr{R}\phi_{\mr{id}}\bigl(\mr{R}f_{(y)*}\mc{F}\bigr)
   \cong
   \mr{R}\phi_f(\mc{F})_{y}.
 \end{equation*}
 We combine 3 displayed equalities $(\heartsuit)$ to conclude the proof of \ref{complem-2}.
\end{proof}

\subsection{Proof of Main theorem}\mbox{}\\
By \cite[Lem 5.3]{KSS}, it suffices to show \cite[Conj 5.1]{KSS} in the case where $S=k\{t\}$ with $k$ an algebraically closed field of characteristic $p$.
This follows by combining Proposition \ref{complem} and Theorem \ref{thm}.
\qed

\section{Concluding remarks}

\subsection{}
There are at least two geometric constructions of the classical Artin representation in the literature:
one by Laumon \cite[2.6.1]{L} that we have used, and the other by Katz \cite[1.6]{K}.
These two constructions are not totally independent, and the idea of the proof of Theorem \ref{mainthmcons} clarifies a relation.
Let $k$ be a separably closed field and put $K:=\mr{Frac}(k\{t\})$ as before.
Let $f\colon\mr{Spec}(L)\rightarrow\mr{Spec}(K)$ be a finite Galois extension with Galois group $G$.
We put $\mr{Reg}_G:=f_*\mb{Z}_\ell$, and take a smooth sheaf $(\mr{Reg}_G)^{\sim}$ on $\mb{G}_{\mr{m},t}$ ({\it e.g.}, canonical extension)
such that it coincides with $\mr{Reg}_G$ around $0$ and tame at $\infty$.
Let $\mb{G}_{\mr{m}}\xrightarrow{j}\mb{A}^1\xrightarrow{j'}\mb{P}^1$ be the open immersions.
Katz showed that $\mr{H}^1\bigl(\mb{P}^1,j'_!\mr{R}j_*(\mr{Reg}_G)^{\sim}\bigr)$ viewed as a $\mb{Z}_\ell[G]$-module realizes the Swan representation.
Consider $\mc{M}:=j'_!\mr{R}j_*(\mr{Reg}_G)^{\sim}\otimes\mc{L}(tt')$ on $\mb{P}^1_t\times\mb{P}^1_{t'}$ using the notations of Theorem \ref{mainthmcons}.
We consider $\mc{C}:=\mr{R}\mr{pr}'_*\mc{M}$ on $\mb{P}^1_{t'}$, where $\mr{pr}'\colon\mb{P}^1_t\times\mb{P}^1_{t'}\rightarrow\mb{P}^1_{t'}$ is the projection.
Invoking \cite[2.4.3 (ii)]{L}, the vanishing cycles of $\mr{R}\mr{pr}'_*\mc{M}$ around $0$ is equal to $\Lambda[G]$ as virtual representations of $G$.
For a geometric point $x$ of $\mb{P}^1_{t'}$ over a closed point,
let $\Psi_{\mr{id},x\leftarrow\overline{\eta}'}$ denote the nearby cycle functor of $\mr{id}$ around $x$, using the notation of \cite[\S4]{A}.
Since $\mr{H}^1(\mb{P}^1,\mc{M}_{0'})\cong\mc{C}_{0'}$, we have
\begin{equation*}
 \left[\Psi_{\mr{id},0'\leftarrow\overline{\eta}'}(\mc{C})\right]=\left[\mr{H}^1(\mb{P}^1,\mc{M}_{0'})\right]+[\Lambda[G]]
\end{equation*}
as virtual representations of $G$.
Note that $\left[\mr{H}^1(\mb{P}^1,\mc{M}_{0'})\right]$ is nothing but Katz's Swan representation.
On the other hand, in view of \cite[2.4.3 (iii)]{L}, we have
\begin{equation*}
 \Psi_{\mr{id},\infty'\leftarrow\overline{\eta}'}(\mc{C})
  \cong
  \Psi_{\mr{pr}',\infty'\leftarrow\overline{\eta}'}\bigl(j'_!\mr{R}j_*(\mr{Reg}_G)^{\sim}\otimes\mc{L}(tt')\bigr)_{(0,\infty')},
\end{equation*}
and the latter module is a variant of the one appeared in Laumon's construction.
Since $\Psi_{x\leftarrow\overline{\eta}'}(\mc{C})$ does not depend on a geometric point $x$ of $\mb{P}^1_{t'}$ as a virtual representation of $G$,
Katz's and Laumon's construction are related via $\mc{C}$.

\subsection{}
Serre's conjecture do not tell us about the rationality of the Artin representation over $\mb{Q}_p$.
If fact, even in the classical case, the Artin representation is not always $\mb{Q}_p$-rational as shown in \cite[\S5]{S}.
However, it should be reasonable to expect the following:

\begin{conj*}
 We consider the situation of Introduction, and assume that the residue field $k:=A/\mf{m}$ is perfect.
 Then the Artin representation is $W(k)\otimes\mb{Q}$-rational.
\end{conj*}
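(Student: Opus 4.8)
The plan is to imitate the proof of the Main theorem, replacing $\ell$-adic coefficients by $p$-adic ones. As there, (the evident analogue of) \cite[Lem 5.3]{KSS} reduces the conjecture to the geometric case $S=\mr{Spec}(k\{t\})$ with $k$ perfect, and since $k\mapsto W(k)$ is functorial one may even take $k$ algebraically closed and descend afterwards; thus it suffices to produce a $(W(k)\otimes\mb{Q})[G]$-module whose character is $a_G$. By the Main theorem such a module already exists over $\mb{Q}_\ell$ for every $\ell\neq p$, but these fields are incomparable with $W(k)\otimes\mb{Q}$, so genuinely $p$-adic input is unavoidable. I would construct the required module as the total-dimension representation attached to the $p$-adic nearby cycle along $f$ of the constant overconvergent $F$-isocrystal: replace the constructible $\Lambda$-complexes of \S\ref{sec1}--\S\ref{sec2} by holonomic arithmetic $\mc{D}$-modules (equivalently, by a suitable category of overconvergent $F$-isocrystals), the Artin--Schreier sheaf $\mc{L}$ by the Dwork $F$-isocrystal on $\mb{A}^1$, and $\Psi_{\mr{pr}'}$ by the $p$-adic nearby-cycle functor. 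One then establishes the $p$-adic counterparts of Proposition \ref{complem} and Theorem \ref{thm} --- the latter by a $p$-adic product formula of Laumon type, in the spirit of the work of Abe--Marmora on $p$-adic $\varepsilon$-factors --- noting that the counterpart of Proposition \ref{complem} makes the coefficients descend to $W(k)\otimes\mb{Q}$ (the ramified extension needed to write down the Dwork isocrystal disappears, just as $\zeta_p$ does in \S\ref{sec2}). Finally, the $p$-$\ell$ compatibility of Swan conductors and $\varepsilon$-factors identifies the character of the resulting $G$-representation with $a_G$. The hypothesis that $k$ is perfect enters precisely where the Frobenius structures must be defined over $W(k)$, which is why one expects it to be needed.

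A second, more elementary-looking route avoids new geometry by combining the Main theorem with the arithmetic of Schur indices. Write $a_G=\sum_i d_i\,\theta_i$ with the $\theta_i$ the $\mb{Q}$-rational irreducible characters. The Main theorem (for all $\ell\neq p$) says the local Schur index $m_{i,w}$ divides $d_i$ for every non-archimedean place $w$ of $\mb{Q}(\theta_i)$ not above $p$; granting that the Artin representation is realizable over $\mb{R}$ (classical in dimension one), the same holds for archimedean $w$. By Benard's theorem $m_{i,w}$ depends only on the rational prime below $w$, and by Brauer--Hasse--Noether reciprocity $\sum_w\mr{inv}_w=0$, so $d_i$ kills the sum of the local invariants of the relevant division algebra at the places above $p$. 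Improving this to ``$d_i$ kills each such invariant'' --- the obstruction being that $p$ may split in $\mb{Q}(\theta_i)$ --- would yield $\mb{Q}_p$-rationality, hence $(W(k)\otimes\mb{Q})$-rationality since the Schur index cannot increase along $\mb{Q}_p\to W(k)\otimes\mb{Q}$; but this improvement is where the argument stalls, and it seems to require a Brauer--Speiser-type bound on the residual index at $p$ or, once more, genuinely $p$-adic information. Note that the argument as stated uses no hypothesis on $k$, so the missing input must be exactly what the perfectness of $k$ provides.

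The main obstacle, in either approach, is the behaviour at $p$. For the first approach one must develop (or extract from the literature) a relative Lefschetz--Verdier formalism with ``constancy of the local term'' for holonomic $\mc{D}$-modules, a workable theory of $p$-adic nearby cycles over a trait whose residue field is perfect but possibly not finite, and the $p$-adic analogue of \cite[2.6.1]{L}; and one must upgrade the resulting equality of inertia-traces to an equality of virtual $G$-representations, for instance by checking that both sides compute $-[\mr{R}\phi]$ of the same geometric datum. For the second approach one must control the $p$-primary part of the relevant Brauer classes directly. In both cases the crux is, in essence, the one-dimensional case: the statement that the Swan representation of a local field with perfect residue field $k$ is realizable over $W(k)\otimes\mb{Q}$, which is Serre's refined rationality question in dimension one and, as far as the author is aware, appears to be open.
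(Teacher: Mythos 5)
The statement you are addressing is not proved in the paper at all: it is stated as an open conjecture in the concluding remarks, accompanied only by two comments --- that the case where $A$ is a discrete valuation ring is known, due to J.-M.\ Fontaine, and that the author \emph{expects} the method of the paper to adapt to $p$-adic cohomology via the nearby cycle functor of \cite{Ane}. Your first route is essentially that expected program (Dwork isocrystal in place of the Artin--Schreier sheaf, arithmetic $\mc{D}$-modules/overconvergent $F$-isocrystals in place of constructible $\Lambda$-complexes, a $p$-adic $\Psi_{\mr{pr}'}$, and $p$-adic analogues of Theorem \ref{thm} and Proposition \ref{complem}), but as you yourself acknowledge, none of the required ingredients --- a relative Lefschetz--Verdier formalism with constancy of the local term in this setting, a $p$-adic analogue of \cite[2.6.1]{L}, the upgrade from traces to virtual $G$-representations --- is established here or supplied by you. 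So the proposal is a research outline, not a proof, and there is no proof in the paper to measure it against.

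Two concrete missteps should also be flagged. First, your closing claim that the crux ``in essence, the one-dimensional case \dots appears to be open'' contradicts the paper itself: the DVR case is known by Fontaine, and that is precisely the base case any induction or reduction would rest on. Second, your Schur-index route aims to establish $\mb{Q}_p$-rationality and then deduce $W(k)\otimes\mb{Q}$-rationality; but the paper (citing \cite[\S5]{S}) recalls that even in the classical case the Artin representation is \emph{not} $\mb{Q}_p$-rational in general, so the improvement you hope for --- killing each local invariant above $p$ --- is not merely where the argument stalls, it is false as a general target. Your own observation that this route uses no hypothesis on $k$ is the symptom: any correct argument must use the residue field through $W(k)$ in an essential way, exactly as in Fontaine's theorem, and neither of your routes does so beyond naming the expectation.
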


The case where $A$ is a discrete valuation ring, the conjecture is known due to J.-M.\ Fontaine.
We expect that the argument in this article can be adapted to the $p$-adic cohomology theory using the nearby cycle functor of \cite{Ane}.

\subsection{}
In Theorem \ref{mainthmcons}, the case $\sigma=1$ is excluded.
The right hand side of (\ref{nbcyag}) is equal to $-\bigl(\mr{d}f,[T^*_XX]\bigr)_{T^*X}$ by the Milnor formula (cf.\ \cite[Thm 5.9]{Schar}).
This is nothing but the (negative of) Milnor number $-\mu_f$ of $f$ by \cite[Ex 14.1.5 (d)]{Fu}.
Note that, {\it a priori}, $\mu_f$ depends on $f$, and we cannot expect to coincide with $-a_{X,G}(1)$ without some constraints on $f$.

In the case where $X$ is of dimension $1$, Takeshi Saito observed that the validity of (\ref{nbcyag}) is related to the conductor-discriminant formula.
In this case, $\mu_f=(\mr{d}f,T^*_XX)$ is nothing but the discriminant $\mr{disc}(f)$ of $f$ (at $f(x)$).
Since $X/G$ is normal, it is smooth.
The morphism $f$ factors as $X\xrightarrow{q} X/G\xrightarrow{g} S$, and we have $\mr{disc}(q)+\mr{disc}(g)=\mr{disc}(f)$.
On the other hand, the conductor-discriminant formula implies that $\mr{disc}(q)=a_{G,X}(1)$.
Thus, we have $\mu_f-a_{G,X}(1)=\mr{disc}(g)$, and in particular, the equality (\ref{nbcyag}) remains to hold for $\sigma=1$ if $g$ is \'{e}tale at $q(x)$.
It might be interesting to generalize this picture to higher dimensional situation.

\noindent
Tomoyuki Abe:\\
Kavli Institute for the Physics and Mathematics of the Universe (WPI)\\
The University of Tokyo\\
5-1-5 Kashiwanoha,  
Kashiwa, Chiba, 277-8583, Japan\\
e-mail: {\tt tomoyuki.abe@ipmu.jp}

\end{document}